\newtheorem{thm}{Theorem}
\newtheorem{rem}{Remark}
\newtheorem{lem}{Lemma}
\newtheorem{cor}{Corollary}
\renewcommand{\d}{\displaystyle}  
\def\l({\left(}  \def\r){\right)}
\DeclareMathOperator{\1}{\textbf{1}}
\DeclareMathOperator{\id}{id}
\DeclareMathOperator{\si}{si}
\global\long\def\epsilon{\varepsilon} 
\DeclareMathAlphabet{\mathpzc}{OT1}{pzc}{m}{it}
\begin{document}
\date{}

\title{\bf  The summatory function of the M\"{o}bius function involving the greatest common divisor}   
\author{Isao Kiuchi and Sumaia Saad Eddin}

\maketitle
{\def\thefootnote{}
\footnote{{\it Mathematics Subject Classification 2020: 11N37, 11A25, 11M99,  \\ 
Keywords:  gcd-sum function,\   Möbius  function,\  Dirichlet series,  \ 
 Riemann  hypothesis,\  Asymptotic results on arithmetical functions  }}

\begin{abstract}
Let $\gcd(m,n)$ denote the greatest common divisor of the positive integers $m$ and $n$, and let $\mu$ represent the M\" obius function. For any real number $x>5$, we define the summatory function of the M\" obius function involving the greatest common divisor as
$
S_{\mu}(x) := \sum_{mn\leq x} \mu(\gcd(m,n)).   
$
In this paper, we present an asymptotic formula for $S_{\mu}(x)$. Assuming the Riemann Hypothesis, we delve further into the asymptotic behavior of $S_{\mu}(x)$ and derive a mean square estimate for its error term. Our proof employs the Perron formula, Parseval's theorem, complex integration techniques, and the properties of the Riemann zeta-function.
\end{abstract}

\maketitle

\section{Introduction and main results}


Let $\mu(n)$ denote the M\"obius function, namely
\begin{equation*}
	\mu(n)=\left\{\begin{array}{cl} 
					 \d 1 &\ \ \ {\rm if} \ n=1, \ \ \mbox{} \\
					 \d (-1)^k &\ \ \ {\rm if} \ n\ {\rm is\ squarefree\ and}\ n=p_1p_2\cdots p_k, \ \ \mbox{} \\
					 \d 0 &\ \ \ {\rm if} \ n\ {\rm is\ not\ squarefree}. \\ 
				 \end{array} \right. 
\end{equation*}  
For any integer $k\geq 2$, let $\gcd(d_{1},\ldots,d_{k})$ denote the greatest common divisor of the positive integers $d_{1},\ldots,d_{k}$.  We define the Dirichlet convolution of two arithmetic functions $f$ and $g$ as  $f * g (n) = \sum_{d \mid n} f(d) g\left({n}/{d}\right)$  
for all positive integers $n$. 
The arithmetic functions $\1(n)$ and $\id (n)$ are defined by $\1(n) = 1$ 
and $\id (n)=n$, respectively. 
The function $\tau_k$ is the $k$-factors Piltz divisor function given by $\1*\1*\cdots \1$. Specifically, for $k=2$, we denote $\tau_2$ as $\tau$. For an arbitrary arithmetic function $g$, we define the sum
\begin{align*}                                                      
f_{g,k}(n):= \sum_{d_{1}\cdots d_{k}=n}g\left(\gcd(d_{1},\ldots, d_{k})\right)
\end{align*}
and its summatory function as
\begin{align} 
\label{Sums}
S_{g,k}(x):=\sum_{n\leq x} f_{g,k}(n)=M_{g,k}(x)+E_{g,k}(x),  
\end{align}  
where $M_{g, k}(x)$ and $E_{g,k}(x)$ denote the main term and the error term of $S_{g,k}(x)$, respectively. 
In 2012, Kr\"{a}tzel, Nowak and T\'{o}th \cite{KNT} provided asymptotic formulas for a class of arithmetic functions that describe the value distribution of the greatest common divisor function, generated by a Dirichlet series whose analytic behavior is determined by the factor $\zeta^2(s)\zeta(2s-1),$ where $\zeta(s)$ is the Riemann zeta-function. They proved that 
\begin{equation}
\label{S}
 f_{g,k}(n) = \sum_{a^{k}b=n}\mu*g(a)\tau_{k}(b).
\end{equation}
The identity \eqref{S} was instrumental in establishing asymptotic formulas for $f_{g,k}(n)$ for specific choices of $g$, such as the identity function ${\rm id}$ and the sum of divisors function $\sigma={\rm id}*\1$. 
Furthermore, they showed that the Dirichlet series generated by $ f_{g,k}(n)$ is expressed by    
\begin{align}                                                       
\label{D}                              
\sum_{n=1}^{\infty} \frac{f_{g,k}(n)}{n^s} 
&=  \frac{\zeta^k(s)}{\zeta(ks)}\sum_{n=1}^{\infty}\frac{g(n)}{n^{ks}}, 
\end{align}
which converges absolutely in the half-plane $\Re(s) >\sigma_{0}$,  where $\sigma_{0}$ depends on $g$ and $k$.

Now, we set $g=\mu$, and use the identity 
$
\sum_{n=1}^{\infty}\frac{\mu(n)}{n^s}=\frac{1}{\zeta^{}(s)}
$ 
to derive 
\begin{align}                                                        \label{mu-k}     
S_{\mu,k}(x)  &= \sum_{m\leq x^{\frac1k}}\mu*\mu(m)\sum_{n\leq \frac{x}{m^k}}\tau_{k}(n)  
\end{align}
and that  
\begin{align}                                                     
\sum_{n=1}^{\infty} \frac{f_{\mu,k}(n)}{n^s} &=  \frac{\zeta^k(s)}{\zeta^{2}(ks)},          
\label{mu-D}
\end{align}
where both series converge absolutely in the half-plane $\Re (s) > 1$.\\
Throughout the article when $k=2$, we adopt simplified notation: 
$f_{\mu, 2}(n)=f_\mu(n), S_{\mu, 2}(x)=S_{\mu}(x), M_{\mu, 2}(x)=M_{\mu}(x),$ and $E_{\mu, 2}(x)=E_{\mu}(x).$\\

The first purpose of this paper is to establish the following result which provides an asymptotic formula of $S_{\mu,2}(x).$
 \begin{thm}       
 \label{th1}     
For any real number $x>5$,  we have  
\begin{align*}                        
S_{\mu}(x)
= \frac{1}{\zeta^{2}(2)}\left(\log x + 2\gamma -1 - 4\frac{\zeta^{\prime}(2)}{\zeta(2)}\right)x+O\left(x^{1/2}\log^2 x \right),
\end{align*}
where $\gamma$ is the Euler-Mascheroni constant and $\zeta^{\prime}(s)$ is the first derivative of the zeta-function $\zeta(s).$  
\end{thm}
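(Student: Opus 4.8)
The natural starting point is the elementary identity \eqref{mu-k} specialised to $k=2$,
\[
S_{\mu}(x)=\sum_{m\le\sqrt x}(\mu*\mu)(m)\,D\!\left(\frac{x}{m^{2}}\right),
\qquad\text{where}\quad D(y):=\sum_{n\le y}\tau(n),
\]
combined with the classical Dirichlet divisor estimate $D(y)=y\log y+(2\gamma-1)y+O(\sqrt y)$. Substituting this and splitting the resulting expression into three pieces, I expect the contribution of the $O(\sqrt y)$ remainder to be
\[
\ll\sqrt x\sum_{m\le\sqrt x}\frac{|(\mu*\mu)(m)|}{m}\ll\sqrt x\,\log^{2}x,
\]
since $\sum_{m\ge1}|(\mu*\mu)(m)|m^{-s}=\bigl(\zeta(s)/\zeta(2s)\bigr)^{2}$ has a double pole at $s=1$, whence $\sum_{m\le y}|(\mu*\mu)(m)|/m\ll\log^{2}y$ by partial summation (equivalently, one may use $|(\mu*\mu)(m)|\le\tau(m)$). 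This already produces the error term asserted in Theorem~\ref{th1}, so the remaining task is to extract the main term.

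For that, writing $\tfrac{x}{m^{2}}\log\tfrac{x}{m^{2}}=x\log x-2x\log m$, I need the two sums $\sum_{m\le\sqrt x}(\mu*\mu)(m)m^{-2}$ and $\sum_{m\le\sqrt x}(\mu*\mu)(m)(\log m)m^{-2}$. The key analytic input is \eqref{mu-D}, which gives $\sum_{m\ge1}(\mu*\mu)(m)m^{-s}=\zeta^{-2}(s)$ for $\Re s>1$ and, after differentiation, $\sum_{m\ge1}(\mu*\mu)(m)(\log m)m^{-s}=2\zeta'(s)/\zeta^{3}(s)$. Both series converge absolutely at $s=2$, so I may complete the two finite sums to infinity; the tails over $m>\sqrt x$ are $\ll(\log x)/\sqrt x$ and $\ll(\log^{2}x)/\sqrt x$ respectively, and after multiplication by $x\log x$ and by $x$ they are absorbed into $O(\sqrt x\log^{2}x)$. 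This yields
\[
\sum_{m\le\sqrt x}\frac{(\mu*\mu)(m)}{m^{2}}=\frac{1}{\zeta^{2}(2)}+O\!\left(\frac{\log x}{\sqrt x}\right),
\qquad
\sum_{m\le\sqrt x}\frac{(\mu*\mu)(m)\log m}{m^{2}}=\frac{2\zeta'(2)}{\zeta^{3}(2)}+O\!\left(\frac{\log^{2}x}{\sqrt x}\right).
\]

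Putting the pieces together, the $x\log x$-part contributes $x\log x/\zeta^{2}(2)$, the $-2x\log m$-part contributes $-4x\zeta'(2)/\zeta^{3}(2)$, and the $(2\gamma-1)y$-part contributes $(2\gamma-1)x/\zeta^{2}(2)$; collecting these and factoring out $x/\zeta^{2}(2)$ reproduces exactly the claimed formula. I do not anticipate a serious obstacle: the only delicate points are the bound $\sum_{m\le y}|(\mu*\mu)(m)|/m\ll\log^{2}y$ and the correct evaluation of the logarithmic-derivative constant $-4\zeta'(2)/\zeta(2)$, which is precisely where differentiating $\zeta^{-2}(s)$ enters. A purely complex-analytic route is also available — apply the truncated Perron formula to $\zeta^{2}(s)/\zeta^{2}(2s)$, shift the contour to $\Re s=\tfrac12$ past the double pole at $s=1$ (whose residue reproduces the main term), and estimate the shifted integral using the mean value $\int_{1}^{T}|\zeta(\tfrac12+it)|^{2}\,dt\ll T\log T$ together with the non-vanishing of $\zeta$ on $\Re s=1$ — and this is the approach that generalises to the sharper conditional statements proved later in the paper; but for Theorem~\ref{th1} the elementary argument above is cleaner and gives the stated error term directly.
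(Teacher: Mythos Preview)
Your argument is correct and complete: the elementary decomposition via \eqref{mu-k} with $k=2$, the Dirichlet bound $\Delta(y)=O(\sqrt y)$, the tail estimates for $\sum_{m>\sqrt x}(\mu*\mu)(m)m^{-2}$ and its logarithmic twist, and the identification of the constants through $\sum_{m\ge1}(\mu*\mu)(m)m^{-s}=\zeta^{-2}(s)$ all go through as you describe and deliver exactly the stated main term with error $O(x^{1/2}\log^{2}x)$.

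However, this is \emph{not} the route the paper takes for Theorem~\ref{th1}. The paper proves Theorem~\ref{th1} by the complex-analytic method you mention only in your closing remark: truncated Perron formula for $\zeta^{2}(s)/\zeta^{2}(2s)$, a contour shift to $\Re s=\tfrac12$, and---for the vertical integral---an appeal to the mean-value estimate $\int_{1}^{T}|\zeta(\tfrac12+it)|^{2}|\zeta(1+2it)|^{-2}\,dt\ll T\log T$ of Yang~\cite{Y}, which after integration by parts yields $K_{2}\ll x^{1/2}\log^{2}T$. Your elementary approach is instead essentially the skeleton of the paper's proof of Theorem~\ref{th31}, where the same convolution identity is combined with the sharper Lemma~\ref{lem21} (in place of your crude tail bounds) and a splitting of the $\Delta$-sum to push the error below $x^{1/2}$. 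What your approach buys is self-containment---no need for the Yang mean-value input or contour bookkeeping---at the cost of not setting up the analytic machinery reused in Theorems~\ref{th4} and~\ref{th5}; what the paper's approach buys is that the residue computation and contour template are in place for the later conditional results.
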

The proof is based on applying the Perron formula (which gives a direct link between the summatory
function $\sum_{n\leq x}a_n$ and the corresponding Dirichlet series $\alpha(x)=\sum_{n\geq 1}a_n
/n^s$, for more details see Section \ref{Section2}), contour integration to the generating Dirichlet series and using the properties of the Riemann zeta-function. \\
In the following result, we focus on estimating the error term and employ elementary methods,  including the use of a specific Lemma \ref{lem21} below (in Section \ref{Section2}), to achieve a better bound.
 \begin{thm}       
 \label{th31}     
For any real number $x>5$,  we have  
\begin{align}                        
\label{S-221}                               
E_{\mu}(x)=
O\l(x^{1/2}{\rm exp}\l(-A\l(\log x \log \log  x\r)^{1/3}\r) \r)  
\end{align}
with a positive constant $A$.
\end{thm}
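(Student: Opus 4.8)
The plan is to sharpen the elementary argument behind Theorem~\ref{th1}, feeding in the cancellation in the partial sums of $\mu$ (and hence of $\mu*\mu$) recorded in Lemma~\ref{lem21} in place of the trivial estimates used there. I start from \eqref{mu-k} with $k=2$,
\begin{align*}
S_{\mu}(x)=\sum_{c\le\sqrt{x}}(\mu*\mu)(c)\,D\!\left(\frac{x}{c^{2}}\right),\qquad D(y):=\sum_{n\le y}\tau(n),
\end{align*}
and insert the Dirichlet divisor estimate $D(y)=y\log y+(2\gamma-1)y+\Delta(y)$ with the classical bound $\Delta(y)\ll y^{1/3}\log y$. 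Since $\sum_{c\ge1}(\mu*\mu)(c)c^{-s}=\zeta(s)^{-2}$ we have $\sum_{c\ge1}(\mu*\mu)(c)c^{-2}=\zeta(2)^{-2}$ and $\sum_{c\ge1}(\mu*\mu)(c)(\log c)c^{-2}=2\zeta'(2)\zeta(2)^{-3}$, so that completing to infinity the two main sums produced by the divisor estimate recovers exactly $M_{\mu}(x)$, leaving
\begin{align*}
E_{\mu}(x)=\sum_{c\le\sqrt{x}}(\mu*\mu)(c)\,\Delta\!\left(\frac{x}{c^{2}}\right)-x\sum_{c>\sqrt{x}}\frac{(\mu*\mu)(c)}{c^{2}}\log\frac{x}{c^{2}}-(2\gamma-1)x\sum_{c>\sqrt{x}}\frac{(\mu*\mu)(c)}{c^{2}}.
\end{align*}
Writing $T(t):=\sum_{c\le t}(\mu*\mu)(c)$ and using partial summation together with Lemma~\ref{lem21} (which supplies a bound $T(t)\ll t\,\delta(t)$ with $\delta$ decreasing at least as fast as $\exp(-c(\log t\log\log t)^{1/3})$), the two tail sums over $c>\sqrt{x}$ are $O\!\left(x^{1/2}(\log x)^{2}\,\delta(\sqrt{x})\right)$, already within the target \eqref{S-221}; so the whole problem reduces to the remainder sum $R(x):=\sum_{c\le\sqrt{x}}(\mu*\mu)(c)\,\Delta(x/c^{2})$, whose trivial bound $R(x)\ll x^{1/2}\log^{2}x$ is exactly what produces Theorem~\ref{th1}.

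To do better I split $R(x)$ at a level $V=V(x)$. On the short range $c\le V$ there is no usable oscillation between the weights $(\mu*\mu)(c)$ and the factors $\Delta(x/c^{2})$, so I bound $\Delta$ pointwise and obtain $\sum_{c\le V}(\mu*\mu)(c)\Delta(x/c^{2})\ll x^{1/3}\log x\sum_{c\le V}\tau(c)c^{-2/3}\ll x^{1/3}V^{1/3}\log^{2}x$. On the complementary range $V<c\le\sqrt{x}$ I write $\Delta(x/c^{2})=D(x/c^{2})-\tfrac{x}{c^{2}}\log\tfrac{x}{c^{2}}-(2\gamma-1)\tfrac{x}{c^{2}}$, interchange the order of summation in the $D$-part to reach $\sum_{m<x/V^{2}}\tau(m)\bigl(T(\sqrt{x/m})-T(V)\bigr)$, and then apply Lemma~\ref{lem21} to each $T(\sqrt{x/m})$ and to $T(V)$, dispatching the two explicit sums over $c$ (again tails) by partial summation and Lemma~\ref{lem21}. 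Since $V$ will be taken close enough to $\sqrt{x}$ that $\log V\asymp\log x$, and since $t\mapsto t\,\delta(t)$ is monotone, this range contributes $O\!\left(\tfrac{x}{V}(\log x)^{2}\,\delta(\sqrt{x})\right)$.

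Writing $V=x^{1/2}e^{-w}$, the two pieces of $R(x)$ become $\ll x^{1/2}e^{-w/3}\log^{2}x$ and $\ll x^{1/2}e^{w}\log^{2}x\,\delta(\sqrt{x})$; choosing $w$ so that $e^{-4w/3}\asymp\delta(\sqrt{x})$ equalises them, and after absorbing the powers of $\log x$ into the exponential (legitimate since $\log(1/\delta(\sqrt{x}))$ dominates $\log\log x$) one gets $R(x)\ll x^{1/2}\exp(-A(\log x\log\log x)^{1/3})$ for a suitable $A>0$; the exponent $\tfrac13$ and the $\log\log x$ in \eqref{S-221} reflect (at worst) the quality of Lemma~\ref{lem21}. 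Together with the smaller tail terms this yields \eqref{S-221}. The main obstacle I expect is precisely the treatment of $R(x)$: because $(\mu*\mu)(c)$ and $\Delta(x/c^{2})$ do not correlate in any usable way, the short range $c\le V$ can only be handled by the pointwise divisor bound, so the final exponent is governed by the trade-off between that loss, of size $x^{1/3}V^{1/3}$, and the cancellation Lemma~\ref{lem21} delivers on the long range. A subsidiary technical point is keeping the interchange-of-summation step and the subsequent application of Lemma~\ref{lem21} uniform over $V<\sqrt{x/m}\le\sqrt{x}$, which relies on the monotonicity of $t\mapsto t\,\delta(t)$ and on the choice of $V$ that keeps $\log V\asymp\log x$.
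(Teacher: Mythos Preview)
Your argument is correct and follows the same skeleton as the paper: reduce via \eqref{mu-k} and \eqref{s-full3} to the main terms handled by Lemma~\ref{lem21} plus the remainder $U(x)=\sum_{m\le\sqrt{x}}(\mu*\mu)(m)\Delta(x/m^{2})$, then split $U(x)$ at a cutoff just below $\sqrt{x}$ and use $\Delta(y)\ll y^{1/3}$ on the short range. The one genuine difference is the long range. The paper bounds $|(\mu*\mu)(n)|\le n^{\varepsilon}$ and appeals to the Hafner--Ivi\'c mean value $\int_{1}^{y}\Delta(\xi)\,d\xi=\tfrac{y}{4}+O(y^{3/4})$, i.e.\ it exploits cancellation in $\Delta$; you instead unfold $\Delta=D-(\text{main})$, interchange summations, and exploit cancellation in $T(t)=\sum_{c\le t}(\mu*\mu)(c)$. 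Both devices feed the same saving into the balance and give the same final exponent, so this is a variant rather than a new route; your version has the mild advantage of not needing the $\int\Delta$ formula.

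One small point of attribution: Lemma~\ref{lem21} as stated concerns the weighted tails $\sum_{n\le x}(\mu*\mu)(n)n^{-2}$ and $\sum_{n\le x}(\mu*\mu)(n)n^{-2}\log n$, not $T(t)$ itself. The bound $T(t)\ll t\,\delta(t)$ you invoke is in the same circle of ideas (and indeed can be recovered from Lemma~\ref{lem21} by partial summation at the cost of shrinking the constant in $\delta$, or taken directly from \cite{IK}), but you should cite it as such rather than as a consequence of Lemma~\ref{lem21}. Also, in the long-range estimate the natural majorant is $\delta(V)$, not $\delta(\sqrt{x})$; since your choice of $V$ forces $\log V\asymp\log x$ these differ only in the constant inside the exponential, so the statement of Theorem~\ref{th31} is unaffected, but it is worth saying explicitly.
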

Next, we assume that the Riemann Hypothesis is true. By using the complex integration approach and Montgomery--Vaughan's method \cite{MV1}, we obtain the following result.
 \begin{thm}      
 \label{th4}             
If the Riemann hypothesis is true, then,  
for any real number $x>5$, we have  
\begin{align*}                                                        
& E_{\mu}(x) =  \sum_{\ell\leq Y}\mu*\mu(\ell)\Delta\l(\frac{x}{\ell^2}\r) 
+ O\left(x^{\frac12+\varepsilon}Y^{\frac{1-4\beta}{4-4\beta}+\varepsilon}  \right)          
\end{align*}
with $x^{\varepsilon} \ll Y \ll x^{\frac12-\varepsilon}$, and a fixed number $\beta$ such that $\frac14 < \beta <\frac12$. Here 
$\Delta(x)$ denote the error term in the Dirichlet divisor problem.
\end{thm}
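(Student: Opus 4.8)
The plan is to start from the decomposition $S_\mu(x) = \sum_{\ell \le x^{1/2}} \mu*\mu(\ell) D(x/\ell^2)$ coming from \eqref{mu-k} with $k=2$, where $D(t) = \sum_{n \le t} \tau(n) = t\log t + (2\gamma-1)t + \Delta(t)$ is the Dirichlet divisor summatory function. Splitting the $\ell$-sum at the parameter $Y$, the range $\ell \le Y$ contributes exactly $\sum_{\ell \le Y} \mu*\mu(\ell)\,\Delta(x/\ell^2)$ plus the portion of the main term $M_\mu(x)$ carried by those $\ell$; the tail $Y < \ell \le x^{1/2}$ must be shown to be absorbed into the error term $O(x^{1/2+\varepsilon} Y^{(1-4\beta)/(4-4\beta)+\varepsilon})$. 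Concretely, I would write $E_\mu(x) = \sum_{\ell \le Y}\mu*\mu(\ell)\Delta(x/\ell^2) + R(x,Y)$, where $R(x,Y)$ collects the tail contribution $\sum_{\ell > Y}\mu*\mu(\ell) D(x/\ell^2)$ together with the completion of the main-term pieces $-\sum_{\ell>Y}\mu*\mu(\ell)\bigl(\tfrac{x}{\ell^2}\log\tfrac{x}{\ell^2} + (2\gamma-1)\tfrac{x}{\ell^2}\bigr)$, and then estimate $R(x,Y)$ using RH.

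The first key step is to bound the "smooth tail'' $\sum_{\ell>Y}\mu*\mu(\ell)\bigl(\tfrac{x}{\ell^2}\log\tfrac{x}{\ell^2}+\cdots\bigr)$. Since $\sum_\ell (\mu*\mu)(\ell)\ell^{-s} = \zeta(s)^{-2}$, partial summation against the partial sums of $\mu*\mu$ — which under RH satisfy $\sum_{\ell\le t}\mu*\mu(\ell) \ll t^{1/2+\varepsilon}$ — gives a tail bound of order $x \cdot Y^{-3/2+\varepsilon}$ (the $\ell^{-2}\log$ weight contributes $Y^{-3/2}$ after summation by parts against a $Y^{1/2}$-size partial sum). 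The second, harder step is the tail of the $\Delta$-terms, $\sum_{Y<\ell\le x^{1/2}}\mu*\mu(\ell)\Delta(x/\ell^2)$. Here one cannot simply use the pointwise bound $\Delta(t)\ll t^{1/3}$ for each $\ell$; instead I would invoke Montgomery–Vaughan-type mean-value technology together with a good bound for $\int_1^T |\Delta(t)|^2\,dt \ll T^{3/2}$ (or the conjectural/known estimate with exponent tied to $\beta$), apply Cauchy–Schwarz in $\ell$, and balance. The appearance of the exponent $\tfrac{1-4\beta}{4-4\beta}$ with $\tfrac14 < \beta < \tfrac12$ strongly suggests that $\beta$ is the exponent in an assumed mean-square or large-values bound of the form $\Delta(t) \ll t^{\beta+\varepsilon}$ on average (or $\int |\Delta|^2 \ll T^{1+2\beta}$), and that the balancing of the two tail contributions $x Y^{-3/2}$ against a $\Delta$-contribution of size roughly $x^{1/2} Y^{\text{(something in }\beta)}$ produces exactly that exponent; I would make this precise by writing the $\Delta$-tail as $\ll \sum_{Y<\ell\le x^{1/2}} |\mu*\mu(\ell)| (x/\ell^2)^{\beta+\varepsilon}$ and carrying out the summation.

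The main obstacle is the treatment of the $\Delta$-tail: one must extract genuine cancellation (or at least the right mean-square savings) rather than apply $\Delta$ pointwise, and this is precisely where Montgomery–Vaughan's method enters — applying it to the Dirichlet polynomial / exponential sum underlying $\sum_\ell \mu*\mu(\ell)\Delta(x/\ell^2)$, or alternatively estimating the relevant mean square $\int |E_\mu|^2$ en route. I would organize this by first recording, as a lemma, the needed mean-square bound for $\Delta$ in short intervals and the RH-conditional bound $\sum_{\ell \le t}(\mu*\mu)(\ell) \ll t^{1/2+\varepsilon}$, then assembling the two tail estimates and optimizing; the constraint $x^\varepsilon \ll Y \ll x^{1/2-\varepsilon}$ is exactly what is needed for both tails to be nontrivial and for the retained sum $\sum_{\ell\le Y}\mu*\mu(\ell)\Delta(x/\ell^2)$ to be well-defined and shorter than the full range. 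A secondary technical point is to make sure the completion of the main term (the subtracted $\tfrac{x}{\ell^2}\log\tfrac{x}{\ell^2}$ pieces for $\ell > Y$) genuinely reconstitutes the $M_\mu(x)$ from Theorem \ref{th1}, which follows from the identity $\sum_{\ell=1}^\infty \mu*\mu(\ell)\ell^{-2} = \zeta(2)^{-2}$ and its logarithmic derivative, valid since the series converges absolutely for $\Re s > 1$.
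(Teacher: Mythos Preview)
Your decomposition $S_\mu(x)=\sum_{\ell\le x^{1/2}}\mu*\mu(\ell)D(x/\ell^2)$ and the splitting at $\ell=Y$ match the paper, and the part $\ell\le Y$ is handled exactly as you say. The genuine gap is in the tail. You propose to split the tail into a ``smooth'' piece and a ``$\Delta$-tail'' $\sum_{Y<\ell\le x^{1/2}}\mu*\mu(\ell)\Delta(x/\ell^2)$, and then bound the latter by Cauchy--Schwarz against the mean square of $\Delta$. This destroys the sign cancellation in $\mu*\mu$, and a back-of-the-envelope check shows no saving in $Y$: with $|\Delta(t)|^2\asymp t^{1/2}$ on average one gets $\bigl(\sum_{\ell\le x^{1/2}}\tau(\ell)^2\bigr)^{1/2}\bigl(\sum_{Y<\ell\le x^{1/2}}(x/\ell^2)^{1/2}\bigr)^{1/2}\asymp x^{1/4+\varepsilon}\cdot x^{1/4}(\log x)^{1/2}$, which is $x^{1/2+\varepsilon}$ with no negative power of $Y$ at all. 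Your alternative, using $\Delta(t)\ll t^{\beta+\varepsilon}$ pointwise, gives $\sum_{Y<\ell\le x^{1/2}}(x/\ell^2)^{\beta+\varepsilon}\asymp x^{1/2+\varepsilon}$ again, since $2\beta<1$. So neither of your proposed routes to the $\Delta$-tail yields the stated error.

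The paper does something structurally different: it never splits the tail into main term plus $\Delta$. Instead it treats the \emph{entire} tail
\[
S_2(x;Y)=\sum_{\substack{m\ell^2\le x\\ \ell>Y}}\mu*\mu(\ell)\,\tau(m)
\]
by Perron's formula, with generating series $\zeta^2(s)\,g_Y(2s)$ where $g_Y(s):=\sum_{n>Y}\mu*\mu(n)n^{-s}=\zeta(s)^{-2}-\sum_{n\le Y}\mu*\mu(n)n^{-s}$. The parameter $\beta$ is \emph{not} an exponent for $\Delta$; it is the abscissa $\Re s=\beta\in(\tfrac14,\tfrac12)$ to which the contour is shifted. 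The crucial input---and this is where Montgomery--Vaughan's method actually enters---is the bound $g_Y(\sigma+it)\ll Y^{1/2-\sigma+\varepsilon}(1+|t|)^\varepsilon$ for $\sigma\ge\tfrac12+\varepsilon$ under RH (cf.\ Nowak's lemma cited in the paper). With this, the functional equation, and Lindel\"of on RH, the vertical integral at $\Re s=\beta$ is $\ll x^\beta T^{1-2\beta}Y^{1/2-2\beta+\varepsilon}$ and the Perron remainder is $\ll x^{1+\varepsilon}/T$; balancing these with $T=x^{1/2}Y^{(4\beta-1)/(4-4\beta)}$ produces precisely $x^{1/2+\varepsilon}Y^{(1-4\beta)/(4-4\beta)+\varepsilon}$. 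Your smooth-tail bound $xY^{-3/2+\varepsilon}$ does reappear, but as one of the horizontal-segment contributions, not as a standalone estimate. In short: replace your direct attack on $\sum_{\ell>Y}\mu*\mu(\ell)\Delta(x/\ell^2)$ by a contour-shift argument for $S_2(x;Y)$ with the $g_Y$-bound as the key lemma.
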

By setting $Y=x^{\frac{77}{258}}$ and $\beta=\frac25$ in the above equation and applying the given estimation of $\Delta(x)$ from \eqref{Delta}, we deduce the following result. 
\begin{cor}
\label{cor1}
We have
$$
E_{\mu}(x) \ll x^{\frac{439}{1032}+\varepsilon}.
$$
\end{cor}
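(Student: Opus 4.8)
The plan is to derive Corollary \ref{cor1} directly from Theorem \ref{th4} by optimizing the two parameters $Y$ and $\beta$ that the theorem leaves free. First I would recall the known pointwise bound for the Dirichlet divisor error term, namely the estimate recorded in \eqref{Delta}, which I expect to be of the shape $\Delta(x) \ll x^{\theta+\varepsilon}$ with $\theta = \tfrac{131}{416}$ (the Huxley exponent) or some comparable value; this is the input we substitute into the sum $\sum_{\ell \le Y} \mu*\mu(\ell)\,\Delta(x/\ell^2)$. Since $|\mu*\mu(\ell)| \le \tau(\ell) \ll \ell^{\varepsilon}$, the first term in Theorem \ref{th4} is bounded by
\[
\sum_{\ell \le Y} \ell^{\varepsilon} \l(\frac{x}{\ell^2}\r)^{\theta+\varepsilon}
\ll x^{\theta+\varepsilon} \sum_{\ell \le Y} \ell^{-2\theta+\varepsilon}
\ll x^{\theta+\varepsilon},
\]
using $2\theta > 1$ so that the $\ell$-sum converges. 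Thus the whole error term is
\[
E_{\mu}(x) \ll x^{\theta+\varepsilon} + x^{\frac12+\varepsilon} Y^{\frac{1-4\beta}{4-4\beta}+\varepsilon}.
\]

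Next I would balance the remaining freedom. The exponent $\frac{1-4\beta}{4-4\beta}$ is negative for $\frac14 < \beta < \frac12$, so increasing $Y$ decreases the second term; one pushes $Y$ up to the admissible boundary $Y \ll x^{1/2-\varepsilon}$ — or rather to the precise value $Y = x^{77/258}$ dictated by matching against the first term — and picks $\beta = \tfrac25$ to make the second exponent a clean rational. With $\beta = \tfrac25$ one has $\frac{1-4\beta}{4-4\beta} = \frac{-3/5}{12/5} = -\tfrac14$, so the second term becomes $x^{1/2+\varepsilon} Y^{-1/4+\varepsilon} = x^{1/2 - 77/1032 + \varepsilon} = x^{439/1032 + \varepsilon}$. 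One then checks that the first term $x^{\theta+\varepsilon}$ does not dominate: with $\theta = \tfrac{131}{416} \approx 0.3149$ and $\tfrac{439}{1032} \approx 0.4254$, indeed $\theta < \tfrac{439}{1032}$, so the bound $E_{\mu}(x) \ll x^{439/1032+\varepsilon}$ follows.

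The only real subtlety — and it is a bookkeeping matter rather than a genuine obstacle — is confirming that the choice $Y = x^{77/258}$ is both admissible (it satisfies $x^{\varepsilon} \ll Y \ll x^{1/2-\varepsilon}$, which it clearly does since $\tfrac{77}{258} \approx 0.2984 < \tfrac12$) and optimal given the constraint, i.e. that no larger $Y$ is permitted by the hypotheses of Theorem \ref{th4} and that $\beta = \tfrac25$ is a legitimate value in $(\tfrac14,\tfrac12)$. I would also make explicit which exponent $\theta$ from \eqref{Delta} is being invoked and verify the convergence condition $2\theta > 1$ used above; since every classical bound for $\Delta(x)$ has $\theta < \tfrac12$, this is automatic. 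After these checks, the corollary is immediate by collecting the two error contributions and absorbing the smaller one.
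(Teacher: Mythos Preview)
Your approach is right in outline but contains a genuine arithmetic error that undermines the argument. You claim the $\ell$-sum $\sum_{\ell\le Y}\ell^{-2\theta+\varepsilon}$ converges because $2\theta>1$, and then justify this by noting that every classical bound has $\theta<\tfrac12$; but $\theta<\tfrac12$ gives $2\theta<1$, the \emph{opposite} inequality. With $\theta=\tfrac{131}{416}\approx 0.3149$ one has $2\theta\approx 0.63<1$, so the sum diverges: it is $\asymp Y^{1-2\theta}$. Hence the first term in Theorem~\ref{th4} is bounded only by $x^{\theta+\varepsilon}Y^{1-2\theta+\varepsilon}$, which \emph{grows} with $Y$, not by $x^{\theta+\varepsilon}$ alone.

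This matters because it is precisely the tension between this growing term and the second term $x^{1/2+\varepsilon}Y^{-1/4+\varepsilon}$ (decreasing in $Y$) that forces the balancing choice $Y=x^{(2-4\theta)/(5-8\theta)}=x^{77/258}$ and produces the exponent $\tfrac{439}{1032}$. Your own reasoning, taken at face value (first term $\ll x^{\theta+\varepsilon}$ independent of $Y$), would dictate pushing $Y$ all the way to $x^{1/2-\varepsilon}$, yielding the stronger bound $E_\mu(x)\ll x^{3/8+\varepsilon}$ --- which the method does not give. Once you correct the bound on the $\ell$-sum to $x^{\theta+\varepsilon}Y^{1-2\theta+\varepsilon}$ and balance against $x^{1/2+\varepsilon}Y^{-1/4+\varepsilon}$, you recover exactly the paper's computation.
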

Note that 
$
\frac{439}{1032}=\frac12-\frac{77}{1032}=0.4253875\cdots <\frac12=0.5.
$ 
This implies that the latest estimation of the error term yields a slight improvement in the order of magnitude of \eqref{S-221}. 

Now, we  define the integral of  the  mean square  of  $E_{\mu}(x)$  as follows
\begin{align*}                                                   
I_{\mu}(T) := \int_{1}^{T}E_{\mu}^{2}(t)dt  
\end{align*} 
for any real number $T>5$. 
Lastly, we use Parseval's identity (which establishes a connection between the sum or integral of squares of a function and the sum or integral of squares of its Fourier transform, see (A.5) in \cite{I}) along with certain properties of the Riemann zeta-function to derive the mean square estimates for $E_{\mu}(t)$. Further, we shall investigate the integral $I_{\mu}(T)$ assuming the Riemann Hypothesis.
 \begin{thm}      
 \label{th5}                   
If the Riemann Hypothesis is true, then,
for any real number $T>5$, we have  
\begin{align*}                         
I_{\mu}(T) \ll  T^{3/2}{\rm exp}\l(A\frac{\log T}{\log\log T}\r),               
\end{align*}
with a positive constant $A$.  
\end{thm}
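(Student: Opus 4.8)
\emph{Sketch of the intended proof of Theorem \ref{th5}.} The plan is to express $I_\mu(T)$ through a mean square of the generating Dirichlet series on a vertical line and, assuming the Riemann Hypothesis, to push that line as close as possible to $\Re s=\tfrac14$, where the singularities coming from $1/\zeta^2(2s)$ are located. Write $E_\mu(x)=S_\mu(x)-M_\mu(x)$ with $M_\mu(x)=x\bigl(c_1\log x+c_2\bigr)$ the main term of Theorem \ref{th1}, and put $P(s)=\dfrac{c_1}{(s-1)^2}+\dfrac{c_2}{s-1}$. Starting from \eqref{mu-D} with $k=2$ and the standard relation $\int_1^\infty S_\mu(x)\,x^{-s-1}\,dx=s^{-1}\sum_{n}f_\mu(n)n^{-s}$, together with the elementary identity $\int_1^\infty M_\mu(x)\,x^{-s-1}\,dx=P(s)$, one obtains
\[
\Phi(s):=\int_1^\infty E_\mu(x)\,x^{-s-1}\,dx=\frac{\zeta^2(s)}{s\,\zeta^2(2s)}-P(s)\qquad(\Re s>1),
\]
the right-hand side furnishing the analytic continuation of $\Phi$. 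Since the pole at $s=1$ has been removed and, under the Riemann Hypothesis, every zero of $\zeta(2s)$ lies on $\Re s=\tfrac14$, the function $\Phi$ is holomorphic and of polynomial growth on vertical lines in the half-plane $\Re s>\tfrac14$.

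Next I would invoke Parseval's identity for Mellin transforms, (A.5) in \cite{I}, in the form
\[
\int_1^\infty\frac{E_\mu^2(x)}{x^{2\sigma+1}}\,dx=\frac{1}{2\pi}\int_{-\infty}^{\infty}\bigl|\Phi(\sigma+it)\bigr|^2\,dt .
\]
For $\sigma>\tfrac12$ this is immediate, the left side converging by Theorem \ref{th31}. The essential point is to extend it to the range $\tfrac14<\sigma\le\tfrac12$: granted that the right-hand integral is finite there (which follows from the estimate in the next step), the identity propagates downward via the holomorphy of $\Phi$ in $\Re s>\tfrac14$, the a priori bound $E_\mu(x)\ll x^{1/2}$ of Theorem \ref{th31}, and the smoothing device of Lemma \ref{lem21}, which also disposes of the jump discontinuities of $S_\mu(x)$. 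Granting this, for any $\sigma\in(\tfrac14,\tfrac12)$,
\[
I_\mu(T)=\int_1^T E_\mu^2(t)\,dt\le T^{2\sigma+1}\int_1^\infty\frac{E_\mu^2(x)}{x^{2\sigma+1}}\,dx\ll T^{2\sigma+1}\,J(\sigma),\qquad J(\sigma):=\int_{-\infty}^{\infty}\bigl|\Phi(\sigma+it)\bigr|^2\,dt .
\]

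The heart of the argument is the estimate of $J\bigl(\tfrac14+\delta\bigr)$ as $\delta\to0^+$. The contribution of $P$ is $O(1)$, so one must bound $\int\bigl|\zeta(\tfrac14+\delta+it)\bigr|^4\bigl|\tfrac14+\delta+it\bigr|^{-2}\bigl|\zeta(\tfrac12+2\delta+2it)\bigr|^{-4}\,dt$. Two inputs are needed. From the functional equation $\zeta(s)=\chi(s)\zeta(1-s)$ with $|\chi(\tfrac14+\delta+it)|\asymp|t|^{1/4-\delta}$ and the classical fourth power moment $\int_0^V|\zeta(\tfrac34-\delta+it)|^4\,dt\ll V$ to the right of the critical line, partial summation yields $\int_0^V|\zeta(\tfrac14+\delta+it)|^4\,dt\ll V^{2-4\delta}$. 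Under the Riemann Hypothesis, Littlewood's conditional bound gives, for $|t|\ge3$,
\[
\bigl|\zeta\bigl(\tfrac12+2\delta+2it\bigr)\bigr|^{-4}\ll\exp\!\left(\frac{c\,(\log|t|)^{1-4\delta}}{\log\log|t|}\right).
\]
Inserting both into the integral and integrating by parts against the (eventually decreasing) weight $t^{-2}\exp\bigl(4c(\log t)^{1-4\delta}/\log\log t\bigr)$, the resulting integral turns out to be concentrated at $|t|$ of size $\exp\exp(O(1/\delta))$, and a careful Laplace-type evaluation there leads to a bound of the shape
\[
J\bigl(\tfrac14+\delta\bigr)\ll\exp\!\bigl(c_1\,\delta\,e^{\,c_2/\delta}\bigr)
\]
with absolute positive constants $c_1,c_2$.

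It then remains to optimise over $\delta$. Choosing $\delta=c_2/\log\log T$ makes $e^{c_2/\delta}=\log T$, so that $T^{2\sigma+1}=T^{3/2}\exp\bigl(2c_2\log T/\log\log T\bigr)$ and $J(\tfrac14+\delta)\ll\exp\bigl(c_1c_2\log T/\log\log T\bigr)$; combining with the displayed bound for $I_\mu(T)$ gives
\[
I_\mu(T)\ll T^{3/2}\exp\!\left(\frac{A\log T}{\log\log T}\right)
\]
for a suitable positive constant $A$, as claimed. The two genuine difficulties I anticipate are: (a) making rigorous the extension of Parseval's identity below the abscissa of convergence $\sigma=\tfrac12$ — one must control the jumps of $S_\mu(x)$ and establish sufficient decay of $\Phi$ on vertical lines, which is precisely where Theorem \ref{th31} and Lemma \ref{lem21} enter — and (b) extracting the correct $\delta$-dependence in the bound for $J(\tfrac14+\delta)$, since estimating the $t$-integral crudely over the enormous range up to $\exp\exp(1/\delta)$ is much too wasteful, so the sub-polynomial decay of $\bigl|\zeta(\tfrac12+2\delta+2it)\bigr|^{-1}$ afforded by the Riemann Hypothesis must be exploited with care.
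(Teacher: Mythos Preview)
Your overall strategy---represent $E_\mu$ by a Mellin/Parseval identity on a line $\Re s=\tfrac14+\delta$ and optimise $\delta\sim 1/\log\log T$---is exactly the paper's route. Two points of execution differ from the paper and deserve comment.

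First, the paper does not use Littlewood's bound $|\zeta(\tfrac12+2\delta+2it)|^{-1}\ll\exp\bigl(C(\log|t|)^{1-4\delta}/\log\log|t|\bigr)$ and then a ``Laplace-type evaluation'' of the resulting integral. It uses instead the much blunter consequence recorded as Lemma~\ref{lem3}: under RH, $1/\zeta(\sigma+it)\ll t^{\varepsilon}$ once $\sigma\ge\tfrac12+1/\log\log t$. With $\alpha=\tfrac14+3\varepsilon$ and $\varepsilon=c/\log\log T$, this and the functional equation reduce the dyadic block of the $t$-integral to $\int_{T/2}^{T}t^{-1-4\varepsilon}\,dt\ll 1$ in one line. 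Your version reaches the same endpoint but only after bounding $\int_1^\infty e^{-4\delta u}\exp(4Cu^{1-4\delta}/\log u)\,du$, which you leave as a sketch; this step is genuinely extra work that the paper sidesteps. (Your claimed shape $J(\tfrac14+\delta)\ll\exp(c_1\delta e^{c_2/\delta})$ is in fact correct, but proving it cleanly is not trivial, whereas Lemma~\ref{lem3} makes the whole issue disappear.)

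Second, your appeal to Lemma~\ref{lem21} as a ``smoothing device'' that ``disposes of the jump discontinuities'' is misplaced: Lemma~\ref{lem21} is an estimate for $\sum_{n\le x}(\mu*\mu)(n)n^{-2}$ and has nothing to do with extending Parseval below $\sigma=\tfrac12$. The paper's justification for pushing the identity into $\tfrac14<\sigma<\tfrac12$ is different: one first shows that the contour integral representation of $E_\mu(x)$ persists on any line $\Re s=\alpha$ with $\alpha_0<\alpha<1$ (no poles there under RH, and the integrand decays by Lemma~\ref{lem3}), and then Parseval applies directly because the vertical integral $J(\alpha)$ has already been shown to converge. No smoothing is needed.
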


\begin{rem} 
 Under the Riemann Hypothesis, if  $I_{\mu}(T)$ can be evaluated as
$$ 
 c_{\mu} T^{3/2}(\log T)^{\ell} + O\l(T^{3/2}(\log T)^{\ell-\delta}\r) 
$$ 
as $x\to \infty$ with a constant $c_{\mu}>0$ depending on $\mu$ 
and numbers $\ell>0$, $\delta>0$,  then we can derive the Omega-result for $E_{\mu}(x)$, namely 
\begin{align*}                               
E_{\mu}(x) = \Omega\l(x^{1/4}(\log x)^{\ell/2}\r).
\end{align*} 
\end{rem} 
\subsection*{Sums of squarefree numbers } 
We set  $g=|\mu|$ in  \eqref{S}, yielding
\begin{align}                                                    
S_{|\mu|,k}(x)  &= \sum_{m\leq x^{1/k}}\mu*|\mu|(m)\sum_{n\leq {x}/{m^k}}\tau_{k}(n).   
\label{mu-k1}
\end{align}
Using \eqref{D} and the Dirichlet  series 
$
\sum_{n=1}^{\infty}\frac{|\mu(n)|}{n^s}=\frac{\zeta(s)}{\zeta^{}(2s)},
$ 
we deduce    
\begin{align}                                                     
\sum_{n=1}^{\infty} \frac{f_{|\mu|,k}(n)}{n^s} &=  \frac{\zeta^k(s)}{\zeta^{}(2ks)},          
\label{mu-D1}
\end{align}
which converges absolutely in the half-plane $\Re (s) > 1$. From \eqref{mu-D1}, we derive an alternative expression for \eqref{mu-k1}, as follows:
\begin{align}                                                                                
\label{mu-k2}
S_{|\mu|,k}(x)  &= \sum_{m\leq x^{{1}/{(2k)}}}\mu(m)\sum_{n\leq {x}/{m^{2k}}}\tau_{k}(n).  
\end{align} 
Taking $k=2$ in \eqref{mu-k2} and using \eqref{s-full3} and  \eqref{Delta} along with the Dirichlet series 
$
\sum_{n=1}^{\infty}\frac{\mu(n)}{n^s}=\frac{1}{\zeta(s)}
$
and 
$
\sum_{n=1}^{\infty}\frac{\mu(n)}{n^s}\log n =\frac{\zeta^{\prime}(s)}{\zeta^{2}(s)},
$
we obtain    
\begin{align*}                                                                   
S_{|\mu|,2}(x) 
&=  \sum_{m\leq x^{{1}/{4}}}\mu(m)\sum_{n\leq {x}/{m^{4}}}\tau_{}(n)  \nonumber \\
&=  x\l(\log x  +2\gamma -1 \r)\sum_{m\leq x^{1/4}}\frac{\mu(m)}{m^4}
-2x\sum_{m\leq x^{1/4}}\frac{\mu(m)}{m^4}\log m   +  \sum_{m\leq x^{1/4}}\mu(m)\Delta\l(\frac{x}{m^4}\r)  \nonumber \\
&=\frac{x}{\zeta(4)}\l(\log x  +2\gamma -1 -2\frac{\zeta^{\prime}(4)}{\zeta(4)}\r) + O\l(x^{\theta+\varepsilon}\r).
\end{align*}
Therefore, we have 
\begin{equation*}
  S_{|\mu|,2}(x) =\frac{x}{\zeta(4)}\left(\log x  +2\gamma -1 -2\frac{\zeta^{\prime}(4)}{\zeta(4)}\right) + O\left(x^{\theta+\varepsilon}\right)
\end{equation*}
with
$
\theta=\frac{131}{416} 
$
and any real number $\varepsilon >0$.
\section{Some known results}
\label{Section2}
In this section, we recall the material we need to prove our results. Throughout this section and in subsequent ones, we adhere to the traditional notation $s=\sigma+it$.   
\begin{lem} 
\label{lem1}
For $t\geq t_{0} >0$ uniformly in $\sigma$, we have  
\begin{align*}
\zeta(\sigma+it)
&\ll \left\{\begin{array}{cl} 
\d  t^{(3-4\sigma)/6}\log t            &  \  \ \l(0\leq \sigma \leq 1/2\r),     \   \smallskip  \\
\d  t^{(1-\sigma)/3}\log t             &  \  \ \l(1/2  \leq  \sigma \leq 1\r),  \   \smallskip  \\
\d  \log t                                  &  \  \ \l(1\leq \sigma < 2\r).     \   \smallskip  
\end{array} \right.
\end{align*}
\end{lem}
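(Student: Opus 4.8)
The plan is to treat the three ranges in turn: the range $1\le\sigma<2$ by an elementary truncation, the central range $1/2\le\sigma\le1$ by the approximate functional equation together with van der Corput's method, and the remaining range $0\le\sigma\le1/2$ by the functional equation. For $1\le\sigma<2$ I would start from the standard identity
\begin{align*}
\zeta(s)=\sum_{n\le N}n^{-s}+\frac{N^{1-s}}{s-1}-\frac12 N^{-s}-s\int_{N}^{\infty}\frac{\{u\}-\tfrac12}{u^{s+1}}\,du\qquad(\sigma>0),
\end{align*}
where $\{u\}$ denotes the fractional part, valid for any positive integer $N$. Taking $N=\lfloor t\rfloor$ and using $\sigma\ge1$: the sum is $\ll\sum_{n\le N}n^{-1}\ll\log t$, the term $N^{1-s}/(s-1)$ is $\ll N^{1-\sigma}/|t|\ll1$, and the integral term is $\ll|s|\,N^{-\sigma}\ll|t|\,|t|^{-\sigma}\le1$; hence $\zeta(\sigma+it)\ll\log t$.

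For the central range I would use the Hardy--Littlewood approximate functional equation: with $\zeta(s)=\chi(s)\zeta(1-s)$ and $2\pi XY=|t|$, $X,Y\ge1$,
\begin{align*}
\zeta(s)=\sum_{n\le X}n^{-s}+\chi(s)\sum_{n\le Y}n^{s-1}+O\l(X^{-\sigma}\r)+O\l(|t|^{1/2-\sigma}Y^{\sigma-1}\r).
\end{align*}
Choosing $X=Y=\sqrt{|t|/2\pi}$ makes both error terms $O(t^{-\sigma/2})=O(1)$, while Stirling's formula gives $|\chi(\sigma+it)|\asymp(t/2\pi)^{1/2-\sigma}$ uniformly for $\sigma$ in a bounded interval. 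Thus it remains to bound two Dirichlet polynomials of length $\asymp t^{1/2}$ with monotone coefficients; I would split each dyadically into $O(\log t)$ ranges $M<n\le2M$ with $M\ll t^{1/2}$. On each range, partial summation reduces matters to an exponential sum $\sum_{M<n\le 2M}n^{\mp it}$, whose logarithmic phase has first derivative of size $\asymp t/M$ on $[M,2M]$, for which I would use the trivial bound $\ll M$ when $M\le t^{1/3}$ and the van der Corput exponent pair $(1/6,2/3)$, giving $\ll t^{1/6}M^{1/2}$, when $t^{1/3}<M\le t^{1/2}$. Restoring the coefficient $n^{-\sigma}$ (respectively $n^{\sigma-1}$ together with the extra factor $|\chi(s)|\asymp t^{1/2-\sigma}$) and using $\sigma\ge1/2$, a short computation shows that every dyadic contribution is $\ll t^{(1-\sigma)/3}$ — the trivial and van der Corput bounds coinciding exactly at $M=t^{1/3}$ — so summing over the $O(\log t)$ ranges gives $\zeta(\sigma+it)\ll t^{(1-\sigma)/3}\log t$.

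Finally, for $0\le\sigma\le1/2$ I would apply $\zeta(s)=\chi(s)\zeta(1-s)$ directly: since $1-\sigma\in[1/2,1]$, the central range already gives $\zeta(1-\sigma-it)=\overline{\zeta(1-\sigma+it)}\ll t^{\sigma/3}\log t$, and combining with $|\chi(\sigma+it)|\asymp t^{1/2-\sigma}$ yields
\begin{align*}
\zeta(\sigma+it)\ll t^{1/2-\sigma}\cdot t^{\sigma/3}\log t=t^{(3-4\sigma)/6}\log t,
\end{align*}
which is the last assertion. The only genuinely nontrivial ingredient is the estimate for $\sum_{M<n\le 2M}n^{\mp it}$ in the central range — equivalently the exponent pair $(1/6,2/3)$, obtained from the trivial pair by one Weyl--differencing step followed by one Poisson--summation step; everything else (the truncations, Stirling's asymptotics for $\chi$, the partial summations and the dyadic bookkeeping) is routine. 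The lemma in this precise form is classical, and a complete proof can be found in Titchmarsh, \emph{The Theory of the Riemann Zeta-function}, Ch.~V, or Ivić, \emph{The Riemann Zeta-Function}, Ch.~1.
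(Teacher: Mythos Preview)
Your proposal is correct: the three-range treatment via the elementary truncation for $\sigma\ge1$, the approximate functional equation plus the van der Corput exponent pair $(1/6,2/3)$ for $1/2\le\sigma\le1$, and the functional equation for $0\le\sigma\le1/2$ is precisely the classical argument, and your dyadic bookkeeping checks out. The paper itself does not give an argument at all but simply cites \cite{Te} and \cite{I}; your sketch is exactly the proof contained in those references (and in the Titchmarsh and Ivi\'{c} chapters you mention), so there is no discrepancy.
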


\begin{proof}
The above formulas follow from  \cite[Theorem II.3.8]{Te}, and \cite[Theorem 8.5 and (8.112)]{I}.  
\end{proof}

\begin{lem}
\label{lem21}
For any  number $x>5$, we have 
\begin{align*}                       
\sum_{n\leq x}\frac{\mu*\mu(n)}{n^2} = \frac{1}{\zeta^{2}(2)} + O\l(\frac{\delta(x)}{x}\r)
\end{align*}
and 
\begin{align*}                       
\sum_{n\leq x}\frac{\mu*\mu(n)}{n^2}\log n = 2\frac{\zeta^{\prime}(2)}{\zeta^{3}(2)} + O\l(\frac{\delta(x)}{x}\r),
\end{align*}
 where 
$
\delta(x)= {\rm exp}\l(-c\l(\log x \log\log x\r)^{1/3}\r) 
$ 
with $c$ being a positive constant. 
\end{lem}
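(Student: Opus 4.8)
The plan is to start from the Dirichlet series $\sum_{n\geq1}(\mu*\mu)(n)n^{-s}=1/\zeta^2(s)$, which converges absolutely for $\sigma>1$, and extract the partial sums $\sum_{n\le x}(\mu*\mu)(n)n^{-2}$ and $\sum_{n\le x}(\mu*\mu)(n)n^{-2}\log n$ by comparing them with the full series. Write $\mu*\mu = \mu\cdot(\1*\1)$... more precisely note that $(\mu*\mu)(n)=\sum_{d\mid n}\mu(d)\mu(n/d)$, so $\sum_{n\le x}(\mu*\mu)(n)n^{-2}$ can be handled via the hyperbola method / Dirichlet convolution: write $\sum_{n\le x}(\mu*\mu)(n)n^{-2}=\sum_{ab\le x}\mu(a)\mu(b)(ab)^{-2}$ and reduce the tail $\sum_{n>x}(\mu*\mu)(n)n^{-2}$ to estimates on $\sum_{n>y}\mu(n)n^{-2}$, which are controlled by the classical zero-free region bound $\sum_{n\le y}\mu(n)=O(y\,\delta(y))$ (equivalently $\sum_{n\le y}\mu(n)/n=O(\delta(y))$) with $\delta(y)=\exp(-c(\log y\log\log y)^{1/3})$ coming from the Vinogradov--Korobov zero-free region. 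Partial summation upgrades $\sum_{n\le y}\mu(n)=O(y\delta(y))$ to $\sum_{n>y}\mu(n)/n^2=O(\delta(y)/y)$ and $\sum_{n>y}\mu(n)\log n/n^2=O(\delta(y)\log y /y)$, absorbing the $\log y$ into the constant in the exponent of $\delta$.

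Concretely, for the first identity I would write
\begin{align*}
\sum_{n\le x}\frac{(\mu*\mu)(n)}{n^2}=\frac{1}{\zeta^2(2)}-\sum_{n>x}\frac{(\mu*\mu)(n)}{n^2},
\end{align*}
and then bound the tail by splitting $\sum_{n>x}(\mu*\mu)(n)n^{-2}=\sum_{ab>x}\mu(a)\mu(b)(ab)^{-2}$ according to whether $a\le\sqrt x$ or $b\le\sqrt x$ (both cannot exceed $\sqrt x$ when $ab>x$), giving something like $2\sum_{a\le\sqrt x}\mu(a)a^{-2}\sum_{b>x/a}\mu(b)b^{-2}+(\text{overlap})$; each inner sum is $O(\delta(x/a)\,a/x)$, and summing $\sum_{a\le\sqrt x}a^{-2}\cdot a/x\cdot\delta(x/a)\ll (\delta(\sqrt x)/x)\sum_a a^{-1}\ll \delta(\sqrt x)\log x/x\ll\delta(x)/x$ after adjusting the constant $c$. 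The second identity is the same argument with $\log n=\log a+\log b$ split across the two convolution factors, using $\sum_{n\le y}\mu(n)\log n/n\ll\delta(y)\log y$ (or differentiating $1/\zeta(s)$ and using $\sum_{n\le y}\mu(n)\log n=O(y\delta(y))$), and noting that $\sum_{n\geq1}(\mu*\mu)(n)\log n/n^2 = -(1/\zeta^2)'(2)=2\zeta'(2)/\zeta^3(2)$ is the correct main term. One should double-check the sign: $\frac{d}{ds}\zeta(s)^{-2}=-2\zeta(s)^{-3}\zeta'(s)$, so $\sum_n(\mu*\mu)(n)(\log n)n^{-s}=-\frac{d}{ds}\zeta(s)^{-2}=2\zeta'(s)\zeta(s)^{-3}$, which at $s=2$ is $2\zeta'(2)/\zeta^3(2)$, matching the statement.

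The main obstacle is bookkeeping in the double-sum tail estimate: one must organize the $ab>x$ region so that every factor sees an argument that is at least a fixed power of $x$, so that the $\delta$-savings survive, while the harmonic-type sum over the ``small'' variable only costs a $\log x$, which is harmlessly absorbed by weakening the constant in $\delta(x)$. A clean alternative, which I would fall back on if the direct splitting gets messy, is to use the known estimate $\sum_{n\le y}(\mu*\mu)(n)=O(y\,\delta(y))$ directly (this follows from $1/\zeta^2(s)$ having the same analytic features as $1/\zeta(s)$ in the zero-free region, via a standard contour/Perron argument, or from $(\mu*\mu)(n)=\sum_{d^2\mid n}\mu(d)\,\text{(something)}$-type manipulations) and then apply Abel summation once to pass to the weighted sums $\sum n^{-2}$ and $\sum n^{-2}\log n$; this reduces the whole lemma to a single one-variable partial summation. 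Either way the proof is short modulo the classical zero-free-region input, and I would present the Abel-summation route as the primary argument for brevity, remarking that the $\log$ factors produced are absorbed into $\delta(x)$ by shrinking $c$.
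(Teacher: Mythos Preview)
The paper does not prove this lemma itself; it simply cites \cite[Lemma~3.3]{IK}. So there is nothing in the paper to compare your argument against directly.

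Your sketch is correct. The generating series $\sum_n(\mu*\mu)(n)n^{-s}=\zeta(s)^{-2}$ inherits the zero-free region of $\zeta(s)^{-1}$, and either your hyperbola splitting of the tail $\sum_{ab>x}\mu(a)\mu(b)(ab)^{-2}$ or the cleaner fallback you mention (prove $\sum_{n\le y}(\mu*\mu)(n)\ll y\,\delta(y)$ by Perron plus the standard contour shift into the zero-free region, then Abel-sum once) yields the stated bounds; the latter is almost certainly what the cited reference does, and is the route to write up. The Vinogradov--Korobov input you invoke in fact gives the stronger saving $\exp\bigl(-c(\log x)^{3/5}(\log\log x)^{-1/5}\bigr)$, so the weaker $\delta(x)$ in the lemma follows with room to absorb the stray $\log x$ factors you note.

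Two cosmetic points. In the tail splitting, ``both cannot exceed $\sqrt{x}$'' should read ``both cannot be $\le\sqrt{x}$'', and the residual region $a,b>\sqrt{x}$ (your ``overlap'') is dispatched by the trivial bound $\bigl(\sum_{a>\sqrt{x}}a^{-2}\bigr)^2\ll 1/x$, which you should state explicitly. The aside ``$\mu*\mu=\mu\cdot(\1*\1)$'' is false and should be deleted; you correct yourself immediately, so this is purely editorial.
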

\begin{proof}
The proof of this result can be found in \cite[Lemma 3.3]{IK}. 
\end{proof}


The following two well-known results about the properties of the Riemann Zeta-function, which provides the functional equation of the Riemann Zeta-function and its bounds under the Riemann Hypothesis. The proof of these results can be found in \cite{I}, \cite{MV}, and \cite{T}.
\begin{lem} 
\label{lem2}
The Riemann zeta-function  $\zeta(s)$ can be analytically continued to  a meromorphic function in the whole complex plane ${\mathbb C}$, 
its only singularity being a  simple pole at $s=1$ with residue $1$. 
It satisfies a functional equation 
$$\zeta(s)=\chi(s)\zeta(1-s),$$
where $\chi(s)=2^{s}\pi^{s-1}\sin\frac{\pi s}{2}\Gamma(1-s).$ Also, in any bounded vertical strip, using the Stirling formula, we deduce 
$$\chi(s) =\left(\frac{t}{2\pi}\right)^{\frac12-\sigma-it}e^{i(t+\frac{\pi}{4})}\left(1+O\left(\frac{1}{t}\right)\right)  \qquad   (t\geq t_{0}>0).$$ 
\end{lem}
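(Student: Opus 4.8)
The plan is to establish the three assertions in turn, following the classical route. For the analytic continuation and the pole, I would begin from the Dirichlet series $\zeta(s)=\sum_{n\geq 1}n^{-s}$, valid for $\Re(s)>1$, and apply Euler--Maclaurin summation (equivalently, partial summation against $\lfloor u\rfloor$) to obtain the representation
$$
\zeta(s)=\frac{s}{s-1}-s\int_{1}^{\infty}\frac{\{u\}}{u^{s+1}}\,du,
$$
where $\{u\}$ denotes the fractional part. Since $0\leq\{u\}<1$, the integral converges absolutely for $\Re(s)>0$ and defines a holomorphic function there; the term $s/(s-1)$ carries the only singularity, a simple pole at $s=1$ whose residue is $\lim_{s\to 1}(s-1)\cdot\frac{s}{s-1}=1$. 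Iterating Euler--Maclaurin with higher-order Bernoulli terms extends $\zeta$ to any half-plane $\Re(s)>-N$, and hence to all of $\mathbb{C}$; alternatively, the continuation to the left half-plane follows immediately from the functional equation established next.

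For the functional equation I would use Riemann's theta-function argument. Set $\psi(x)=\sum_{n\geq 1}e^{-\pi n^{2}x}$; Poisson summation yields the modular relation $2\psi(x)+1=x^{-1/2}\bigl(2\psi(1/x)+1\bigr)$. Starting from $\pi^{-s/2}\Gamma(s/2)\,n^{-s}=\int_{0}^{\infty}e^{-\pi n^{2}x}x^{s/2-1}\,dx$ and summing over $n$, I would obtain, for $\Re(s)>1$, the identity $\pi^{-s/2}\Gamma(s/2)\zeta(s)=\int_{0}^{\infty}\psi(x)\,x^{s/2-1}\,dx$. Splitting the integral at $x=1$ and applying the substitution $x\mapsto 1/x$ together with the modular relation on $(0,1)$ produces
$$
\xi(s):=\pi^{-s/2}\Gamma(s/2)\zeta(s)=\frac{1}{s(s-1)}+\int_{1}^{\infty}\psi(x)\bigl(x^{s/2-1}+x^{-(s+1)/2}\bigr)\,dx,
$$
an expression manifestly invariant under $s\mapsto 1-s$. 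Thus $\xi(s)=\xi(1-s)$, which on solving for $\zeta(s)$ gives $\zeta(s)=\chi(s)\zeta(1-s)$ with $\chi(s)=\pi^{s-1/2}\Gamma((1-s)/2)/\Gamma(s/2)$. To reach the stated form of $\chi$, I would clear the Gamma factors using the reflection formula $\Gamma(s/2)\Gamma(1-s/2)=\pi/\sin(\pi s/2)$ and the duplication formula $\Gamma(\frac{1-s}{2})\Gamma(\frac{2-s}{2})=2^{s}\sqrt{\pi}\,\Gamma(1-s)$; multiplying these and collecting the powers of $\pi$ yields precisely $\chi(s)=2^{s}\pi^{s-1}\sin(\pi s/2)\Gamma(1-s)$.

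Finally, for the asymptotic of $\chi(s)$ in a bounded strip I would work from $\chi(s)=\pi^{s-1/2}\Gamma((1-s)/2)/\Gamma(s/2)$, take logarithms, and apply Stirling's formula $\log\Gamma(w)=(w-\tfrac12)\log w-w+\tfrac12\log(2\pi)+O(1/|w|)$ to each Gamma factor with $w=s/2$ and $w=(1-s)/2$. Writing $s=\sigma+it$ with $\sigma$ bounded and $t\geq t_{0}$, I would expand $\log(s/2)=\log(t/2)+i\pi/2-i\sigma/t+O(1/t^{2})$ and $\log((1-s)/2)=\log(t/2)-i\pi/2+i(1-\sigma)/t+O(1/t^{2})$, then combine real and imaginary parts. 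The $\log(t/2)$ contributions assemble with the $\pi^{s-1/2}$ factor into $(t/2\pi)^{1/2-\sigma-it}$; the term $-\tfrac{i\pi}{2}(w_{1}+w_{2}-1)$ contributes the phase $e^{i\pi/4}$, while the difference $w_{2}-w_{1}=s-\tfrac12$ contributes $e^{it}$; all remaining terms collapse into $1+O(1/t)$. I expect this last step to be the only genuinely delicate one: the $O(\sigma/t)$ corrections to the arguments of $w_{1}$ and $w_{2}$ each generate $O(1)$ real contributions that must be tracked carefully and shown to cancel against $\mathrm{Re}(s-\tfrac12)$, and the precise phase constant $\pi/4$ together with the uniformity of the error in $\sigma$ throughout the strip hinges on exactly this bookkeeping.
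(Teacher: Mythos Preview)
Your proposal is correct and follows the classical route (partial summation for the continuation to $\Re(s)>0$, Riemann's theta identity for $\xi(s)=\xi(1-s)$, reflection and duplication to pass to the asymmetric form of $\chi$, and Stirling for the asymptotic). The paper, however, does not supply any proof of this lemma at all: it simply records the statement as a well-known fact and refers to Ivi\'c~\cite{I}, Montgomery--Vaughan~\cite{MV}, and Titchmarsh~\cite{T} for details. So there is no argument in the paper to compare against; what you have written is precisely the standard derivation found in those references, and it would serve as a self-contained replacement for the citation.
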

\begin{lem}   \label{lem3}  
Assume that the Riemann Hypothesis is true.                                                        
For $t\geq t_{0}\geq 0$, uniformly in $\sigma$,  we have   
\begin{align*}                                                                                                   
\zeta(\sigma+it) 
&\ll  \left\{\begin{array}{cl} 
t^{\varepsilon}                                     &  \ \    \left(1/2 \leq  \sigma \leq 2 \right),     \\ 
1                                                   &  \ \    \left(\sigma > 2 \right),     
\end{array} \right.
\end{align*}
and 
\begin{align*}                                                                                                   
\frac{1}{\zeta(\sigma+it)} 
&\ll  \left\{\begin{array}{cl} 
t^{\varepsilon}                                     &  \ \    \left(\frac12 +\frac{1}{\log\log t} \leq  \sigma \leq 2 \right),      \\ 
1                                                   &  \ \    \left(\sigma > 2 \right).    
\end{array} \right.
\end{align*} 
\end{lem}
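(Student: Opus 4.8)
\textbf{Proof proposal for Lemma \ref{lem3}.}

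The plan is to derive both the upper bound for $\zeta(\sigma+it)$ and the upper bound for $1/\zeta(\sigma+it)$ from standard consequences of the Riemann Hypothesis, namely the Lindel\"of-type convexity bounds in the critical strip and the zero-free region they induce on the other side of the critical line. First I would treat the bound for $\zeta$ itself. For $\sigma>2$ the Dirichlet series $\sum_{n\geq 1}n^{-s}$ converges absolutely and is bounded by $\zeta(\sigma)\leq\zeta(2)=O(1)$, which gives the trivial estimate $\zeta(\sigma+it)\ll 1$. For $1/2\leq\sigma\leq 2$ I would invoke the Lindel\"of Hypothesis, which is a known consequence of the Riemann Hypothesis: on $\mathrm{RH}$ one has $\zeta(1/2+it)\ll t^{\varepsilon}$, and by the Phragm\'en--Lindel\"of principle applied between the line $\sigma=1/2$ and any line $\sigma=2$ (where $\zeta$ is bounded) one obtains $\zeta(\sigma+it)\ll t^{\varepsilon}$ uniformly in the whole strip $1/2\leq\sigma\leq 2$. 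This handles the first display.

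Next I would turn to the reciprocal $1/\zeta(\sigma+it)$. For $\sigma>2$ the Euler product gives $|\zeta(\sigma+it)|\geq\prod_p(1-p^{-\sigma})\gg 1$, so $1/\zeta(\sigma+it)\ll 1$. The substantive part is the range $\tfrac12+\tfrac{1}{\log\log t}\leq\sigma\leq 2$. Here I would use the classical estimate, valid under $\mathrm{RH}$, for the logarithmic derivative and the size of $\log\zeta$ just to the right of the critical line. The key input is that on $\mathrm{RH}$ one has, for $\sigma\geq\tfrac12+\tfrac{1}{\log\log t}$, the bound
\begin{align*}
\log\frac{1}{\zeta(s)}=-\log\zeta(s)\ll\frac{\log t}{\log\log t}\cdot\frac{1}{(\sigma-\tfrac12)\log t}+\cdots,
\end{align*}
more precisely the standard result (see Titchmarsh, \emph{The Theory of the Riemann Zeta-function}, Ch.~XIV) that under $\mathrm{RH}$,
\begin{align*}
|\log\zeta(\sigma+it)|\ll\frac{(\log t)^{2-2\sigma}}{\log\log t}+\log\log t
\end{align*}
for $\tfrac12\leq\sigma\leq 1$. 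Evaluating this at $\sigma=\tfrac12+\tfrac{1}{\log\log t}$ gives $(\log t)^{2-2\sigma}=(\log t)^{1-2/\log\log t}\ll\log t$, whence $|\log\zeta(s)|\ll\log t/\log\log t$, and therefore $1/|\zeta(s)|\ll\exp(C\log t/\log\log t)$. Since $\exp(C\log t/\log\log t)=t^{C/\log\log t}=t^{o(1)}\ll t^{\varepsilon}$, this yields $1/\zeta(\sigma+it)\ll t^{\varepsilon}$ throughout the stated range. For $\sigma$ between this cutoff and $2$ the bound only improves, so it holds uniformly.

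The main obstacle is controlling $1/\zeta$ \emph{uniformly} as $\sigma$ approaches the critical line: the factor $\tfrac{1}{\log\log t}$ in the lower limit of $\sigma$ is exactly what keeps $(\log t)^{2-2\sigma}$ from exceeding $\log t$, so the calibration of that cutoff against the growth of $\log(1/\zeta)$ is the delicate point and must be tracked carefully to ensure the $t^{\varepsilon}$ conclusion rather than a spurious $t^{\delta}$ with $\delta$ bounded away from $0$. Once the two-sided estimate on $\log\zeta$ near $\sigma=\tfrac12$ is in hand, both the $\zeta$ and $1/\zeta$ bounds follow by exponentiation and Phragm\'en--Lindel\"of interpolation, as indicated. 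Since the statement merely records these facts with a reference to \cite{I}, \cite{MV}, and \cite{T}, the proof ultimately reduces to citing the appropriate theorems on conditional bounds for the zeta-function and its reciprocal, and noting that each displayed inequality is the specialization of those theorems to the ranges of $\sigma$ used later in the paper.
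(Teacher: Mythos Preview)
Your proposal is correct and aligns with the paper's treatment: the paper does not prove Lemma~\ref{lem3} at all but simply states that ``the proof of these results can be found in \cite{I}, \cite{MV}, and \cite{T}.'' You have in fact gone further than the paper by sketching the standard derivations (RH $\Rightarrow$ Lindel\"of via Phragm\'en--Lindel\"of for $\zeta$, and the Titchmarsh Chapter~XIV bound $|\log\zeta(\sigma+it)|\ll (\log t)^{2-2\sigma}/\log\log t$ for $1/\zeta$), which is exactly what one finds in those references.
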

\subsection*{Perron's formula}
The Perron formula is used to express a function defined by a Dirichlet series $\alpha(s)=\sum_{n=1}^{\infty}a_n/n^s$, with abscissa of convergence $\sigma_c$, in terms of its complex integral. That means 
$$\sum_{n\leq x}a_n=\frac{1}{2\pi i}\int_{\sigma_0-i\infty}^{\sigma_0+i\infty}\alpha(s)\frac{x^s}{s}\, ds,$$
for $\sigma_0>\max(\sigma_c,0)$. 
\begin{thm}[Perron's formula, Theorem 5.1 in \cite{MV}]
\label{Perron1}
If $\sigma_0>\max(\sigma_c,0)$ and $x>0$, then 
$${\sum_{n\leq x}}^{\prime}a_n=\lim_{T\rightarrow \infty}\frac{1}{2\pi i}\int_{\sigma_0-iT}^{\sigma_0+iT}\alpha(s)\frac{x^s}{s}\, ds,$$
where $\sum^{\prime}$ indicates that if $x$ is an integer, then the last term into the sum becomes $a_x/2$ instead of $a_x$.
\end{thm}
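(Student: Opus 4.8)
The plan is to reduce the statement to the classical discontinuous integral (the Perron kernel), interchange summation with integration for fixed $T$, and then pass to the limit $T\to\infty$.

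\textbf{Step 1 (the Perron kernel).} For $y>0$ and $T>0$ I set $I(y,T):=\frac{1}{2\pi i}\int_{\sigma_0-iT}^{\sigma_0+iT}\frac{y^s}{s}\,ds$ and first establish the three regimes
$$\lim_{T\to\infty} I(y,T)=\begin{cases} 1 & (y>1),\\ 1/2 & (y=1),\\ 0 & (0<y<1).\end{cases}$$
For $y=1$ this is a direct computation: writing $s=\sigma_0+it$, the imaginary part of the integrand is odd and the real part gives $\frac{1}{\pi}\arctan(T/\sigma_0)\to\frac12$. For $y\ne1$ I shift the contour, using $|y^s|=y^\sigma$: when $y>1$ the integrand decays as $\sigma\to-\infty$, so I close the rectangle to the left and collect the simple pole of $1/s$ at $s=0$, whose residue is $1$; when $0<y<1$ I close to the right, where there is no pole, obtaining $0$. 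Crucially I record the quantitative bounds
$$\left|I(y,T)-\delta(y)\right|\le \frac{y^{\sigma_0}}{\pi T\,|\log y|}\quad(y\ne1),\qquad |I(y,T)|\le \frac{y^{\sigma_0}T}{\pi\sigma_0},$$
where $\delta(y)=1$ for $y>1$ and $\delta(y)=0$ for $0<y<1$; the first comes from estimating the two horizontal segments of the shifted contour, the second from the trivial bound $|y^s/s|\le y^{\sigma_0}/\sigma_0$ on the vertical segment.

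\textbf{Step 2 (interchange of sum and integral).} I fix $T$, substitute $\alpha(s)=\sum_{n\ge1}a_n n^{-s}$ into $\frac{1}{2\pi i}\int_{\sigma_0-iT}^{\sigma_0+iT}\alpha(s)\frac{x^s}{s}\,ds$, and integrate term by term using $(x/n)^s$ in the role of $y^s$, obtaining
$$\frac{1}{2\pi i}\int_{\sigma_0-iT}^{\sigma_0+iT}\alpha(s)\frac{x^s}{s}\,ds=\sum_{n=1}^\infty a_n\, I\!\left(\tfrac{x}{n},T\right).$$
When $\sigma_0$ exceeds the abscissa of absolute convergence this interchange is immediate from Fubini, since on the finite segment $\sum_n |a_n|\,(x/n)^{\sigma_0}\cdot(T/\pi\sigma_0)<\infty$. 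When $\sigma_0$ lies only beyond the abscissa of convergence $\sigma_c$, I would first apply Abel (partial) summation to express $\alpha(s)$ as an absolutely convergent transform of the partial sums $A(u)=\sum_{n\le u}a_n$, which satisfy $A(u)=o(u^{\sigma_0})$, thereby reducing to the absolutely convergent situation before interchanging.

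\textbf{Step 3 (letting $T\to\infty$, and the main obstacle).} It remains to show $\lim_{T\to\infty}\sum_{n}a_n I(x/n,T)={\sum_{n\le x}}'a_n$. By the pointwise limits of Step 1 each term tends to $a_n\delta(x/n)$, with the value $a_x/2$ when $n=x$, and the sum of these limits is exactly ${\sum_{n\le x}}'a_n$. The delicate point—and the main obstacle—is that the kernel bound $|I(x/n,T)-\delta(x/n)|\le (x/n)^{\sigma_0}/(\pi T|\log(x/n)|)$ degenerates as $n\to x$ because of the factor $|\log(x/n)|^{-1}$, so one cannot dominate every term uniformly in $T$. I therefore split the series at $n$ near $x$ versus $n$ bounded away from $x$: for $n$ away from $x$ the bound provides a $T$-independent summable majorant and permits termwise passage to the limit by dominated convergence, while the finitely many terms with $n$ closest to $x$ are handled individually via the pointwise limits of Step 1. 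In the merely conditionally convergent case the tail estimate must additionally be fed through the partial-summation reduction of Step 2 to be legitimate. Combining the ranges yields the claimed identity.
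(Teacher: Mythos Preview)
The paper does not prove this statement at all; it is simply quoted as Theorem~5.1 of Montgomery--Vaughan \cite{MV}, so there is no in-paper argument to compare your attempt against.

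Your outline is the classical proof and is essentially correct. Step~1 (the discontinuous integral for the kernel $y^s/s$, with the quantitative error $y^{\sigma_0}/(\pi T|\log y|)$) and the near/far split in Step~3 are exactly the standard mechanism, and your identification of the blow-up of $|\log(x/n)|^{-1}$ as the main obstacle is on target. In the absolutely convergent case the argument is complete as written: for $n\notin[x/2,2x]$ and $T\ge1$ the kernel bound gives $|I(x/n,T)|\le \delta(x/n)+(x/n)^{\sigma_0}/(\pi\log2)$, which is the $T$-independent summable majorant you invoke.

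The one place that remains a genuine sketch is the conditionally convergent case (the statement has $\sigma_0>\sigma_c$, the abscissa of \emph{convergence}, not absolute convergence). The interchange at fixed $T$ in Step~2 is actually harmless---a Dirichlet series converges uniformly on compact subsets of its half-plane of convergence, so Fubini is not needed. The real issue is Step~3: showing $\sum_{n>2x}a_n I(x/n,T)\to0$ without an $\ell^1$ majorant. Your proposed fix via partial summation against $A(u)=\sum_{n\le u}a_n$ is the right idea; to carry it out you combine the growth bound $A(u)\ll u^{\sigma_c+\varepsilon}$ (which follows from the formula $\sigma_c=\limsup(\log|A(u)|)/\log u$ when $\sigma_c\ge0$, and is trivial when $\sigma_c<0$) with the kernel estimate to produce a tail of size $O_x(1/T)$. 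This is routine once written out, but it is the one step you have not actually executed.
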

\begin{thm}[Theorem 5.2 in \cite{MV}]
\label{Perron2}
If $\sigma_0>\max(\sigma_c,0)$ and $x>0$, then 
$${\sum_{n\leq x}}^{\prime}a_n=\frac{1}{2\pi i}\int_{\sigma_0-iT}^{\sigma_0+iT}\alpha(s)\frac{x^s}{s}\, ds+R,$$
where 
$$
R=\frac{1}{\pi}\sum_{x/2<n<x}a_n \si \left(T\log \frac{x}{n} \right)-\frac{1}{\pi}\sum_{x<n<2x}a_n \si \left(T\log \frac{x}{n} \right)+O\left( \frac{4^{\sigma_0}+x^{\sigma_0}}{T}\sum_n\frac{|a_n|}{n^{\sigma_0}}\right).$$
Here $\si (x)$ is the  sine integral and equal to $-\int_{x}^{\infty}\frac{\sin u}{u}.$ 
\end{thm}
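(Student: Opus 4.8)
The plan is to deduce the effective Perron formula from one sharp estimate for the Perron kernel $K(y):=\frac{1}{2\pi i}\int_{\sigma_0-iT}^{\sigma_0+iT}\frac{y^{s}}{s}\,ds$ $(y>0)$, together with a dissection of the sum over $n$. We may assume $\sigma_0$ exceeds the abscissa of absolute convergence of $\alpha(s)=\sum_{n\ge 1}a_nn^{-s}$, since otherwise $\sum_{n\ge1}|a_n|n^{-\sigma_0}=\infty$ and the asserted bound for $R$ is vacuous. Then $\alpha$ converges absolutely and uniformly on $\Re s=\sigma_0$, so term-by-term integration is legitimate and gives
\begin{equation*}
\frac{1}{2\pi i}\int_{\sigma_0-iT}^{\sigma_0+iT}\alpha(s)\frac{x^{s}}{s}\,ds=\sum_{n=1}^{\infty}a_n\,K\!\left(\frac{x}{n}\right).
\end{equation*}
Since ${\sum_{n\le x}}^{\prime}a_n=\sum_{n\ge1}a_n\big(\1(x/n>1)+\tfrac12\1(x/n=1)\big)$, this shows $R=-\sum_{n\ge1}a_n\,\rho(x/n)$ with $\rho(y):=K(y)-\1(y>1)-\tfrac12\1(y=1)$, so everything reduces to understanding $\rho(y)$ for $y$ near and far from $1$.

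I would estimate $\rho(y)$ in two regimes. For $y$ bounded away from $1$: comparing the finite integral with the conditionally convergent principal-value integral $\frac{1}{2\pi i}\int_{\sigma_0-i\infty}^{\sigma_0+i\infty}\frac{y^{s}}{s}\,ds=\1(y>1)+\tfrac12\1(y=1)$ and bounding the two tails over $|\Im s|>T$ trivially yields $\rho(y)=O\big(y^{\sigma_0}\min(1,(T|\log y|)^{-1})\big)$. For $\tfrac12\le y\le 2$: shifting the segment $[\sigma_0-iT,\sigma_0+iT]$ to the line $\Re s=0$, indented by a small semicircle about the simple pole at $s=0$, picks up $\tfrac12$ from the pole, an integral along the imaginary axis which evaluates (via $s=it$ and $\int_0^T\frac{\sin(t\log y)}{t}\,dt=\mathrm{Si}(T\log y)=\si(T\log y)+\tfrac\pi2$) to $\tfrac12+\tfrac1\pi\si(T\log y)$ in total, and two short horizontal segments of combined size $O(2^{\sigma_0}\sigma_0 T^{-1})$; hence for $\tfrac12\le y\le 2$ with $y\ne 1$,
\begin{equation*}
\rho(y)=\pm\tfrac1\pi\,\si(T\log y)+O\!\left(2^{\sigma_0}\sigma_0 T^{-1}\right),\qquad \rho(1)=O(\sigma_0 T^{-1}),
\end{equation*}
the sign being determined afterwards by the identity $\si(-z)=-\pi-\si(z)$.

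Finally I would split $R=-\sum_n a_n\rho(x/n)$ according to $n\le x/2$, $x/2<n<2x$, and $n\ge 2x$. On the first and third ranges $|\log(x/n)|\ge\log2$, so the crude bound gives $\rho(x/n)\ll (x/n)^{\sigma_0}T^{-1}\le x^{\sigma_0}n^{-\sigma_0}T^{-1}$ (note $(x/n)^{\sigma_0}<1$ when $n\ge 2x$), and these ranges contribute $O\big(x^{\sigma_0}T^{-1}\sum_{n\ge1}|a_n|n^{-\sigma_0}\big)$. On the middle range, where $|\log(x/n)|$ can be as small as $\asymp 1/x$, the precise form applies: the terms with $x/2<n<x$ and with $x<n<2x$ produce, after the $\si$-identities, exactly the two sine-integral sums in $R$ (with the correct signs); the term $n=x$, if $x\in\mathbb Z$, produces $\tfrac12 a_x+O(\sigma_0|a_x|T^{-1})$; and the accumulated error on this range, using $(x/n)^{\sigma_0}\ll 2^{\sigma_0}$ and $n\asymp x$, is $\ll(4^{\sigma_0}+x^{\sigma_0})T^{-1}\sum_{n\ge1}|a_n|n^{-\sigma_0}$. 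Summing the three contributions gives the stated formula.

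The crux is the precise kernel evaluation on $\tfrac12\le y\le 2$: it must be accurate to within $O(2^{\sigma_0}\sigma_0 T^{-1})$, which is small in $T$, so that the explicit term $\tfrac1\pi\si(T\log y)$ genuinely captures the contribution of the $n$ close to $x$ — the naive comparison with the infinite integral only leaves an $O(1)$ error there and so is useless for $R$. Granting this, the dissection is routine bookkeeping, the only points requiring attention being the pinning of the constant $4^{\sigma_0}+x^{\sigma_0}$ and of the signs of the two $\si$-sums through the identity $\si(-z)=-\pi-\si(z)$.
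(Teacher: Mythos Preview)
The paper does not supply its own proof of this statement: Theorem~\ref{Perron2} is simply quoted as Theorem~5.2 of Montgomery--Vaughan~\cite{MV} and used as a black box. There is therefore nothing in the paper to compare your argument against. Your sketch is in fact the standard proof (essentially the one in~\cite{MV}): expand the truncated integral as $\sum_n a_n K(x/n)$ with the Perron kernel $K(y)=\frac{1}{2\pi i}\int_{\sigma_0-iT}^{\sigma_0+iT}y^s s^{-1}\,ds$; for $\tfrac12\le y\le 2$ push the segment to the imaginary axis, indented at $s=0$, to obtain the exact form $K(y)=\tfrac12+\tfrac1\pi\,\mathrm{Si}(T\log y)+O(2^{\sigma_0}\sigma_0 T^{-1})$; for $y$ bounded away from $1$ use the crude tail bound $O\big(y^{\sigma_0}(T|\log y|)^{-1}\big)$; then split the sum over $n$ into the three ranges.

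One small point of care: your phrase ``evaluates \dots\ to $\tfrac12+\tfrac1\pi\si(T\log y)$ in total'' is ambiguous and, if read as the value of $K(y)$, is off by $\tfrac12$. The half-residue at $s=0$ contributes $\tfrac12$, and the principal-value integral along the imaginary axis contributes $\tfrac1\pi\mathrm{Si}(T\log y)=\tfrac12+\tfrac1\pi\si(T\log y)$, so in fact $K(y)=1+\tfrac1\pi\si(T\log y)+O(2^{\sigma_0}\sigma_0 T^{-1})$. With this constant in place one gets $\rho(y)=\tfrac1\pi\si(T\log y)+O(\cdots)$ for $y>1$ and $\rho(y)=-\tfrac1\pi\si(T|\log y|)+O(\cdots)$ for $y<1$, and the identity $\si(-z)=-\pi-\si(z)$ then converts both into the form appearing in the stated expression for $R$. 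The rest of your dissection and error accounting is routine and correct.
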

\begin{cor}[Corollary 5.3 in \cite{MV}]
\label{Perron3}
Under the conditions stated above, we have
$$R\ll \sum_{\substack{x<n<2x, \\n\neq x}}|a_n| \min\left(1, \frac{x}{T|x-n|} \right)+\frac{4^{\sigma_0}+x^{\sigma_0}}{T}\sum_{n=1}^{\infty}\frac{|a_n|}{n^{\sigma_0}}.$$
\end{cor}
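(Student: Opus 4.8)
The plan is to read the bound off directly from Theorem~\ref{Perron2}, whose error term $R$ is there written out \emph{explicitly} as
$$
R=\frac1\pi\sum_{x/2<n<x}a_n\,\si\!\l(T\log\frac xn\r)-\frac1\pi\sum_{x<n<2x}a_n\,\si\!\l(T\log\frac xn\r)+O\!\l(\frac{4^{\sigma_0}+x^{\sigma_0}}{T}\sum_{n}\frac{|a_n|}{n^{\sigma_0}}\r).
$$
The last term here is already exactly the second term claimed in the corollary, so the entire task reduces to estimating the two finite sums, each running over integers $n$ with $n\asymp x$.

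For this I would use two elementary inputs. The first is the pointwise bound $\si(u)\ll\min(1,|u|^{-1})$ for the sine integral --- this is precisely the rate at which the oscillatory correction to the truncated Perron integral decays --- applied with $u=T\log(x/n)$. The second is the comparison $|\log(x/n)|\asymp|x-n|/x$, valid uniformly for $x/2<n<2x$ with $n\neq x$: writing $n=x(1+\theta)$ with $-\tfrac12<\theta<1$, it is just $|\log(1+\theta)|\asymp|\theta|$.

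Combining the two, every summand of the two sums obeys
$$
\left|a_n\,\si\!\l(T\log\frac xn\r)\right|\ll|a_n|\,\min\!\l(1,\frac{1}{T\,|\log(x/n)|}\r)\ll|a_n|\,\min\!\l(1,\frac{x}{T\,|x-n|}\r)
$$
uniformly for $n$ in each of the ranges $x/2<n<x$ and $x<n<2x$; summing over these ranges shows that both sums are at most a constant times
$$
\sum_{\substack{x/2<n<2x\\ n\neq x}}|a_n|\,\min\!\l(1,\frac{x}{T\,|x-n|}\r),
$$
and adding back the $O$-term already in hand from Theorem~\ref{Perron2} gives the asserted bound for $R$.

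I expect the delicate part to be bookkeeping rather than analysis. The comparison $|\log(x/n)|\asymp|x-n|/x$ must be invoked only on the dyadic block $x/2<n<2x$ where it holds --- which is exactly why Theorem~\ref{Perron2} localises the oscillatory part of $R$ there --- and the diagonal term $n=x$, at which $\log(x/n)=0$ and the reciprocal bound breaks down, must be kept inside the ``$\min$ with $1$'', in keeping with the restriction $n\neq x$. Finally, the constant $4^{\sigma_0}$ rather than merely $x^{\sigma_0}$ in the error term is what keeps the estimate meaningful for small $x$, where $(x/2,2x)$ may contain no integers near $x$; there one bounds the (few) surviving terms trivially by $\sum_n|a_n|n^{-\sigma_0}$.
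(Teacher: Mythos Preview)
The paper does not prove this statement at all: it is simply quoted verbatim as Corollary~5.3 of Montgomery--Vaughan~\cite{MV}, with no argument supplied. So there is no ``paper's own proof'' to compare against. Your derivation from Theorem~\ref{Perron2} via the two ingredients $\si(u)\ll\min(1,|u|^{-1})$ and $|\log(x/n)|\asymp|x-n|/x$ on the dyadic block is exactly the standard one-paragraph argument (and is essentially what~\cite{MV} does), and it is correct.

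One bookkeeping point worth flagging: your argument naturally produces a sum over the full range $x/2<n<2x$, $n\neq x$, whereas the corollary as transcribed in the paper has the sum only over $x<n<2x$. This is a discrepancy in the paper's statement (Montgomery--Vaughan's Corollary~5.3 in fact has $x/2<n<2x$), not a gap in your reasoning; your version is the correct one.
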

\section{Proofs} 

\subsection{Proof of Theorem \ref{th1}} 

Without loss of generality we may suppose that $x \in \mathbb{Z}+\frac12$. 
Applying Theorem \ref{Perron2} and Corollary \ref{Perron3} and using \eqref{mu-D} with $k=2$, we obtain 
\begin{align*}                                                                                    
S_{\mu,2}(x) &= \frac{1}{2\pi i} \int_{\alpha-iT}^{\alpha+iT} \frac{\zeta^{2}(s)}{\zeta^{2}(2s)} \frac{x^s}{s^{}}ds 
+ O\left(\frac{x^{1+\varepsilon}}{T^{}}\right),  
\end{align*}
where $\alpha= 1 +\frac{1}{\log x}$ and $T$ is a real parameter at our disposal.   \\
Let $\Gamma(\alpha, \beta,T)$ denote the contour consisting of the line 
segments $[\alpha-iT, \beta-iT]$, $[\beta-iT, \beta+iT]$ and $[\beta+iT, \alpha+iT]$. 
We denote the integrals over the horizontal line segments as $K_{1}$ and $K_{3}$, and the integral over the vertical line segment as $K_{2}$. Setting $\beta=\frac{1}{2}$, and using Lemma \ref{lem1} along with the estimate
$
\frac{1}{|\zeta(\sigma+it)|} \ll  \log (1+ |t|)
$
for $|t|\geq t_{0}>0$ and $\sigma\geq 1$,  we find
\begin{align*}                                                                                          
K_{1}, K_{3} 
&\ll  \frac{\log^{2}T}{T} \int_{\frac12}^{\alpha}\nonumber
|\zeta(\sigma+iT)|^{2}  x^{\sigma}d\sigma    \\
&\ll \frac{\log^{4}T}{T^{1/3}} \int_{\frac12}^{\alpha} \left(\frac{x}{T^{2/3}}\right)^{\sigma} d\sigma    \nonumber   \\
&\ll \log^{4}T \left(\frac{x}{T}  + \frac{x^{1/2}}{T^{2/3}}\right).    
\end{align*}
For the integral $K_{2}$, using the estimate given in \cite{Y} that 
$$\int_{1}^{T}\frac{|\zeta(\frac12+it)|^{2}}{|\zeta(1+2it)|^{2}}\, dt \ll T\log T$$ 
and applying integration by parts, we obtain
$$
\int_{1}^{T}\frac{|\zeta(\frac12+it)|^{2}}{|\zeta(1+2it)|^{2}} \frac{ dt}{t}
\ll  T\log T + \int_{T/2}^{T}\frac{\log t}{t}dt \ll \log^{2} T.
$$
Thus, employing Lemma \ref{lem1}, we have
\begin{align*}                                                                                        
K_{2} 
&\ll x^{1/2} + x^{1/2}
 \int_{t_{0}\leq |t|\leq T} \frac{|\zeta(\frac12+it)|^{2}}{|\zeta(1+2it)|^{2}} \frac{dt}{t}  \\
&\ll x^{1/2} \log^{2}T.    \nonumber
\end{align*}
To complete the proof, we calculate the residue at the double pole $s=1$ with the residue 
$$
\underset{s=1}{\rm{Res}}~\frac{\zeta^{2}(s)}{\zeta^{2}(2s)} \frac{x^{s}}{s},  
$$
which equals
$$
\frac{1}{\zeta^{2}(2)}\l(\log x + 2\gamma -1 - 4\frac{\zeta'(2)}{\zeta(2)}\r)x. 
$$
Therefore, we conclude that
\begin{align*}                                                                                      
S_{\mu,2}(x) = \frac{1}{\zeta^{2}(2)}\l(\log x + 2\gamma -1 - 4\frac{\zeta'(2)}{\zeta(2)}\r)x +   O\left( x^{1/2}\log^{2}x \right),  
\end{align*}
and by setting $T=x^{}$, the proof is complete.


\subsection{Proof of Theorem~\ref{th31}} 

Let $\tau(n)$ denote the  divisor function, defined by ${\bf 1}*{\bf 1}$,  
and let $\Delta(x)$ represent the error term in the asymptotic formula for the sum
\begin{align}                                                                                           
\label{s-full3}
&D(x) := \sum_{n\leq x}\tau(n)  = \l(\log x + 2\gamma -1\r)x + \Delta(x)  
\end{align}
valid for any real number $x>2$. It is known the Dirichlet divisor problem involves studying the properties of $\Delta(x)$. The best known estimate 
\begin{align}                                                                                           \label{Delta}
\Delta(x) \ll x^{\theta+\varepsilon},
\end{align} 
with $\theta=\frac{131}{416}$, was proved by Huxley \cite{H}. 
Using \eqref{s-full3} and \eqref{mu-k} with $k=2$, we obtain
\begin{align*}
S_{\mu}(x)&=\sum_{m\leq x^{1/2}}\mu*\mu(m)\sum_{n\leq {x}/{m^2}}\tau(n) \\
&=x\l(\log x  +2\gamma -1 \r)\sum_{m\leq x^{1/2}}\frac{\mu*\mu(m)}{m^2}
-2x\sum_{m\leq x^{1/2}}\frac{\mu*\mu(m)}{m^2}\log m 
+  U(x),  
\end{align*}
where 
$$
U(x):=\sum_{m\leq x^{1/2}}\mu*\mu(m)\Delta\l(\frac{x}{m^2}\r).
$$
Applying Lemma \ref{lem21}, we have 
\begin{align*}                       
S_{\mu}(x) 
&=\frac{x}{\zeta^{2}(2)}\l(\log x  +2\gamma -1 -4 \frac{\zeta^{\prime}(2)}{\zeta^{}(2)}\r) + U(x) 
+ O\l(x^{1/2}\delta(x)\r),    
\end{align*}
where 
$$
\delta(x)= {\rm exp}\l(-c\l(\log x \log\log x\r)^{1/3}\r) 
$$ with $c$ being a positive constant. We divide $U(x)$ into two sums as follows:
\begin{align}    
\label{A1}
U(x) &=\l(\sum_{n\leq x^{1/2}\delta(x)}+\sum_{x^{1/2}\delta(x)< n\leq x^{1/2}}\r)\mu*\mu(n) \Delta \l(\frac{x}{n^2}\r) \nonumber \\
& := U_{1}(x) + U_{2}(x),  \qquad {\rm say}.
\end{align}
Using the well-known result $\Delta(x)=O(x^{1/3})$ and partial summation, we obtain   
\begin{align}      
\label{A2}  
U_{1}(x)&\ll \sum_{n\leq x^{1/2}\delta(x)}\tau(n)\Delta\l(\frac{x}{n^2}\r) \nonumber \\
         &\ll x^{1/3}\sum_{n\leq x^{1/2}\delta(x)}\frac{\tau(n)}{n^{2/3}}  \nonumber \\
&\ll x^{1/3}\l( x^{1/2}\delta(x)\r)^{1/3} \log x^{1/2}\delta(x)    \nonumber  \\ 
&\ll x^{1/2}{\rm exp}\l(-c' (\log x \log\log x)^{1/3}\r)  
\end{align}
with $c'$ being a positive constant. Next, employing the formula  
$\int_{1}^{y}\Delta(\xi)d\xi =\frac{y}{4} + O(y^{3/4})$
for any $y>1$ (see \cite{HI}), we deduce   
\begin{align}  \label{A3}
U_{2}(x) 
& \ll  \max_{x^{1/2}\delta(x)< n\leq x^{1/2}}|\mu*\mu(n)| \sum_{x^{1/2}\delta(x)< n\leq x^{1/2}} \Delta \l(\frac{x}{n^2}\r) \nonumber \\
& \ \ \ll  x^{\varepsilon}\int_{1}^{1/\delta^{2}(x)}\Delta(\xi)d\xi  \nonumber \\
& \qquad \ll \frac{x^\varepsilon}{\delta^{2}(x)} \ll x^{1/3+\varepsilon}.
\end{align}
Substituting \eqref{A2} and \eqref{A3} into \eqref{A1}, we establish the assertion of Theorem \ref{th31}.

\subsection{Proof of Theorem~\ref{th4}} 

From \eqref{mu-D} with $k=2$, we observe that 
\begin{align}                                                       \label{s-full}
\sum_{n=1}^{\infty}\frac{f_{\mu,2}(n)}{n^s} = \sum_{n=1}^{\infty}\l(\sum_{d^{2}m=n}\mu*\mu(d)\tau(m)\r)\frac{1}{n^s}=\frac{\zeta^{2}(s)}{\zeta^{2}(2s)},
\end{align} 
which converges absolutely for $\Re (s) > 1$. Without loss of generality, we can assume that $x\in \mathbb{Z}+\frac12$.
From \eqref{s-full}, we derive  
\begin{align*}                                                      
S_{\mu}(x)&:=\sum_{n\leq x}f_{\mu,2}(n)  
             =\sum_{n\leq x}\sum_{n=m^{}\ell^{2}}\mu*\mu(\ell)\tau(m). 
\end{align*}
Let $x^{\varepsilon} < Y < x^{\frac{1}{2}-\varepsilon}$. Then the above formula can be rewritten as follows   
\begin{align}                                                     
\label{Eq3-2} 
S_{\mu}(x)
&= \left(\sum_{m\ell^{2}\leq x \atop \ell\leq Y}+ \sum_{m\ell^{2}\leq x \atop \ell  > Y}\right)\mu*\mu(\ell)\tau(m)   \nonumber   \\
&:= S_{1}(x;Y) + S_{2}(x;Y)  \qquad  \quad \text{ say}.     
\end{align}
We define
$$
g_{Y}(s) :=\sum_{n> Y}\frac{\mu*\mu(n)}{n^s} = \frac{1}{\zeta^{2}(s)} -\sum_{n\leq Y}\frac{\mu*\mu(n)}{n^s} 
$$
for $\Re (s)>1$. Following a similar approach as in Lemma 3 in \cite{N}, we find that 
\begin{align}                                                       
\label{MV-g}
g_{Y}(s) = O\l(Y^{\frac12-\sigma+\varepsilon}(1+|t|)^{\varepsilon}\r)
\end{align}
is valid for $\sigma \geq \frac12+\varepsilon$ under the Riemann Hypothesis. 
Applying \eqref{s-full3}, we have 
\footnote{
Alternative expressions for the first and second sums on the right-hand side of  \eqref{Eq3-21} are      
$$
\underset{s=1}{\rm{Res}}~\frac{\zeta^{2}(s)}{\zeta^{2}(2s)}\frac{x^{s}}{s} -
\underset{s=1}{\rm{Res}}~\zeta^{2}(s)g_{Y}(2s)\frac{x^{s}}{s}.
$$}
\begin{align}                                                                             \label{Eq3-21}
&S_{1}(x;Y) =\sum_{\ell\leq Y}\mu*\mu(\ell)D\left(\frac{x}{\ell^2}\right)  \nonumber \\
&=x\sum_{\ell\leq Y}\frac{\mu*\mu(\ell)}{\ell^2}\log \frac{x}{\ell^2} + (2\gamma-1)x\sum_{\ell\leq Y}\frac{\mu*\mu(\ell)}{\ell^2} 
 + \sum_{\ell\leq Y}\mu*\mu(\ell)\Delta\left(\frac{x}{\ell^2}\right).           
\end{align}

To estimate $S_2(x;Y)$, we employ the methodology outlined in the proof of Theorem~\ref{th1}.
Let  $T$  be  a real parameter  at our disposal. 
Employing  Perron's formula (see Section \ref{Section2}) and \eqref{MV-g}, we deduce  
\begin{align*}                                                                          
S_{2}(x;Y)  
&= \frac{1}{2\pi i} \int_{\alpha-iT}^{\alpha+iT} 
\zeta^{2}(s) g_{Y}(2s) \frac{x^s}{s^{}}ds 
+ O\left(\frac{x^{1+\varepsilon}}{T}\right),  
\end{align*}
with $\alpha = 1 +\frac{1}{\log x}$.  
Now, let $\Gamma(\alpha, \beta,T)$ denote the contour consisting of the line segments 
         $[\alpha-iT, \beta-iT]$, 
         $[\beta-iT, \beta+iT]$ 
   and   $[\beta+iT, \alpha+iT]$. 
We shift the integration with respect to $s$ to  $\Gamma(\alpha,\beta,T)$   with 
$
\frac14<\beta<\frac12. 
$  
The integrals over the horizontal line segments are denoted by $S_{2,1}$ and $S_{2,3}$, and  the integral over the vertical line segment by $S_{2,2}$.                        
The main term arises from the sum of the residue at the double pole $s=1$.   
Hence, by the Cauchy residue theorem,   we obtain  
\begin{align}                                                                             
\label{S_11}              
  S_{2}(x;Y) 
&=\frac{1}{2\pi i}
\left\{\int_{\beta+iT}^{\alpha+iT}+ \int_{\beta-iT}^{\beta+iT} +  \int_{\alpha-iT}^{\beta-iT}\right\}
\zeta^{2}(s)g_{Y}(2s) \frac{x^{s}}{s}ds         \nonumber    \\
&+ \underset{s=1}{\rm{Res}}~\zeta^{2}(s)g_{Y}(2s)  \frac{x^{s}}{s}    + O\l(\frac{x^{1+\varepsilon}}{T}\r).     
\end{align}
By applying Lemmas \ref{lem2} and \ref{lem3}, we deduce that  
\begin{align}                                                                             
\label{Q1}                    
&   S_{2,1},\  S_{2,3}  \ll  \frac{1}{T} \l(\int_{\beta}^{\frac12}+\int_{\frac12}^{\alpha} \r)
|\zeta(\sigma+iT)|^{2}~|g_{Y}(2(\sigma+iT))| x^{\sigma}d\sigma   \nonumber  \\
&\ll  Y^{1/2+\varepsilon}T^{2\varepsilon} 
      \int_{\beta}^{\frac12} \l(\frac{x}{T^{2}Y^2}\r)^{\sigma} d\sigma  
+     Y^{1/2+\varepsilon}T^{-1+2\varepsilon} 
      \int_{\frac12}^{\alpha} \l(\frac{x}{Y^2}\r)^{\sigma} d\sigma  \nonumber   \\
&\ll x^{}T^{-1+2\varepsilon}Y^{-3/2+\varepsilon} 
   + x^{1/2}T^{-1+2\varepsilon}Y^{-1/2+\varepsilon} 
   + x^{\beta}T^{-2\beta+2\varepsilon}Y^{1/2-2\beta+\varepsilon}.  
\end{align}
Consider the integral $S_{2,2}$. Let $T\geq T_{0}$, where $T_0$ is a sufficiently large positive number. Using \eqref{MV-g}, and Lemmas \ref{lem2} and \ref{lem3}, we deduce  
\begin{align}                                                                             \label{ki1}
& S_{2,2} \ll x^{\beta}Y^{1/2-2\beta+\varepsilon}  + x^{\beta} \int_{T_{0}\leq |t| \leq T} 
\frac{|\chi(\beta+it)|^{2}|\zeta(1-\beta-it)|^{2}|g_{Y}(2\beta+2it)|}{1+|t|} dt  \nonumber  \\
&\qquad \ll   x^{\beta}T^{1-2\beta+2\varepsilon}Y^{\frac12-2\beta+\varepsilon}.    
\end{align}
Therefore, by substituting  \eqref{Eq3-21}--\eqref{Q1} and \eqref{ki1} into \eqref{Eq3-2} and setting $T=x^{\frac12}Y^{\frac{4\beta-1}{4-4\beta}-\varepsilon}$, we obtain
\begin{align*}                                               
S_{\mu}(x) = \frac{x}{\zeta^{2}(2)}\left(\log x  +2\gamma -1 -4 \frac{\zeta^{\prime}(2)}{\zeta^{}(2)}\right)   
+ \sum_{\ell\leq Y}\mu*\mu(\ell)\Delta\left(\frac{x}{\ell^2}\right) 
+ O\left(x^{\frac12+\varepsilon}Y^{\frac{1-4\beta}{4-4\beta}+\varepsilon} \right),  
\end{align*}
where $x^{\varepsilon} < Y < x^{\frac{1}{2}-\varepsilon}$. This completes the proof.

\subsection{Proof of Corollary \ref{cor1}}

Given that  
$$
\frac{1-4\beta}{4-4\beta}=-\frac14, \quad  \left(\text{taking $\beta=\frac25$}\right), 
$$ 
the error term in Theorem \ref{th4} can be estimated as 
\begin{align}                                    
\label{M1}
\ll x^{1/2+\varepsilon}Y^{-1/4+\varepsilon}.
\end{align}
This allows us to directly estimate the sum in Theorem \ref{th4}, which is
\begin{align}                                    
\label{M2}
\sum_{\ell\leq Y}\mu*\mu(\ell)\Delta\l(\frac{x}{\ell^2}\r) \ll x^{\theta+\varepsilon}Y^{1-2\theta+\varepsilon},
\end{align}
by applying \eqref{Delta}. From \eqref{M1} and \eqref{M2}, setting
$
Y=x^{\frac{2-4\theta}{5-8\theta}}, 
$  
with $\theta=\frac{131}{416}$
we obtain
$$
E_{\mu}(x) \ll x^{\frac{439}{1032}+\varepsilon}.
$$
This completes the proof.

\subsection{Proof of Theorem \ref{th5}}   

We aim to explore the mean square estimate for $E_{\mu}(x)$ under the Riemann Hypothesis.
Our analysis begins with \eqref{mu-D}, where setting $k=2$
yields the following relation
$$\sum_{n\geq 1}\frac{f_{\mu}(n)}{n^s}=\frac{\zeta^2(s)}{\zeta^2(2s)}.$$
Employing the Perron formula, as detailed in Section \ref{Section2}, we derive that   
\begin{align}                                                    
\label{g-21}
S_{\mu}(x) 
&= \frac{1}{2\pi i}\int_{2-i\infty}^{2+i\infty}\frac{\zeta^{2}(s)}{\zeta^{2}(2s)}\frac{x^{s}}{s} ds. 
\end{align} 
Upon shifting the line of integration in \eqref{g-21} to some $1/2<\alpha<1$, we encounter a pole at $s=1$ within the integrand. Using the Cauchy residue theorem, we get 
$$                                                               
S_{\mu}(x)  
= \underset{s=1}{\rm{Res}}~\frac{\zeta^{2}(s)}{\zeta^{2}(2s)} \frac{x^{s}}{s} + \frac{1}{2\pi i}\int_{\alpha-i\infty}^{\alpha+i\infty}\frac{\zeta^{2}(s)}{\zeta^{2}(2s)s}\l(\frac{1}{x}\r)^{-s}ds, 
$$  
As previously discussed, the residue at $s=1$ is given by $$\underset{s=1}{\rm{Res}}~\frac{\zeta^{2}(s)}{\zeta^{2}(2s)} \frac{x^{s}}{s}=
\frac{x}{\zeta^{2}(2)}\left(\log x  +2\gamma -1 -4 \frac{\zeta^{\prime}(2)}{\zeta^{}(2)}\right).
$$
We have
\begin{align}                                                    
\label{g-Delta21}
E_{\mu}(x) & = 
\frac{1}{2\pi i}\lim_{T\to \infty}\int_{\alpha-iT}^{\alpha+iT}\frac{\zeta^{2}(s)}{\zeta^{2}(2s)s}\l(\frac{1}{x}\r)^{-s}ds.
\end{align}
We define $\alpha_{0}$ as the infimum of $\alpha >0$ for which  
\begin{equation}
\label{eq12}
\int_{-\infty}^{\infty}\frac{|\zeta^{}(\alpha+it)|^{4}}{|\zeta(2\alpha+2it)|^4|\alpha+it|^2}dt \ll 1.
\end{equation}
Our goal is to show that for $\alpha>\alpha_0$ and approaching to $1$, the following holds:
\begin{equation}
\label{eq11}
  \frac{1}{2\pi}\int_{-\infty}^{\infty}
\frac{|\zeta^{}(\alpha+it)|^{4}}{|\zeta(2\alpha+2it)|^4 |\alpha+it|^2}dt  
 =\int_{0}^{\infty}E_{\mu}^{2}(x) x^{-2\alpha-1}dx.  
\end{equation}                                                                
From \eqref{g-Delta21}, since ${\zeta^{2}(s)\zeta^{-2}(2s)}{s^{-1}} \to  0$ uniformly in the strip
as $t\to \pm\infty$, and by integrating over the rectangular path passing through the vertices, $\alpha'- iT, \alpha- iT, \alpha +iT$ and $\alpha'+ iT$ where $\alpha_0<\alpha' <\alpha < 1$, we confirm that \eqref{g-Delta21} is valid for any $\alpha>\alpha_0$. By substituting $x$ with $1/x$, and applying Parseval's identity, we achieve the desired equality in \eqref{eq11}. Consequently, if \eqref{eq12} is satisfied for some $\alpha$\ ($\alpha_0<\alpha<1$), then for any $T\geq 5$ we have 
$$                                                                            
\int_{T/2}^{T}E_{\mu}^{2}(x)dx \ll T^{2\alpha+1}.
$$ 
To finalize the proof of our theorem, we must demonstrate that \eqref{eq12} is attainable. Assuming $\alpha_{0}<\alpha <1/2$ as a parameter to be precisely chosen, and for any $T\geq 5$, we employ  Lemma \ref{lem2} to obtain    
\begin{align*}
L(T)&:=\int_{T/2}^{T}\frac{|\zeta^{}(\alpha+it)|^{4}}{|\zeta(2\alpha+2it)|^4 |\alpha+it|^2}dt  \nonumber \\
&=\int_{T/2}^{T}
\frac{|\chi(\alpha+it)|^{4}|\zeta(1-\alpha+it)|^{4}}{ |\zeta(2\alpha+2it)|^4 |\alpha+it|^2}dt \nonumber \\
&\ll \int_{T/2}^{T}
\frac{t^{2-4\alpha}|\zeta(1-\alpha+it)|^{4}}{ |\zeta(2\alpha+2it)|^4 |\alpha+it|^2}dt. \nonumber 
\end{align*}
By setting $\alpha = 1/4+3\varepsilon$, for any sufficiently small $\varepsilon>0$ (choosing $\varepsilon:=\frac{c}{\log\log T}$ with a positive constant $c$), and applying Lemma \ref{lem3}, we deduce
\begin{align*}
L(T) \ll &\int_{T/2}^{T}\frac{|\zeta^{}(\frac34-3\varepsilon-it)|^{4}}{|\zeta(\frac12+6\varepsilon+2it)|^4 t^{1+12\varepsilon}}dt 
     \ll \int_{T/2}^{T}\frac{1}{t^{1+4\varepsilon}}dt  \ll 1.   
\end{align*}
Therefore, we conclude that
\begin{align*} 
\int_{1}^{T}E_{\mu}^{2}(u)du 
&=\sum_{k\leq \frac{\log T}{\log 2}}\int_{2^{k-1}}^{2^k}E_{\mu}^{2}(u)du  
\ll T^{3/2+6\varepsilon} \\
&\ll T^{3/2}{\rm exp}\l(A\frac{\log T}{\log\log T}\r),
\end{align*} 
with a positive real number $A$.

\section*{Acknowledgement}
The first author is supported by JSPS Grant-in-Aid for Scientific Research (C)(21K03205).
The second author is supported by the Austrian Science Fund (FWF): Project N. 35863.

\bigskip

\medskip\noindent {\footnotesize Isao Kiuchi: Department of Mathematical Sciences, Faculty of Science,
Yamaguchi University, Yoshida 1677-1, Yamaguchi 753-8512, Japan. \\
e-mail: {\tt kiuchi@yamaguchi-u.ac.jp}}

\medskip\noindent {\footnotesize Sumaia Saad Eddin: 
Johann Radon Institute for Computational and Applied Mathematics, Austrian Academy of Sciences, Altenbergerstrasse 69, 4040 Linz, Austria.\\
e-mail: {\tt sumaia.saad-eddin@ricam.oeaw.ac.at}}

\end{document}